\newtheorem{theorem}{Theorem}[section]
\newtheorem{proposition}[theorem]{Proposition}
\newtheorem{corollary}[theorem]{Corollary}
\newtheorem{lemma}[theorem]{Lemma}
\theoremstyle{definition}
\newtheorem{notation}[theorem]{Notation}
\numberwithin{equation}{section}
\theoremstyle{remark}
\newtheorem{remark}[theorem]{Remark}
\newcommand{\Cc}{\mathcal{C}}
\newcommand{\Mc}{\mathcal{M}}
\newcommand{\Oc}{\mathcal{O}}
\newcommand{\iso}{\cong}
\newcommand{\Ab}{\mathbb{A}}
\newcommand{\A}{\mathbb{A}}
\newcommand{\Cb}{\mathbb{C}}
\newcommand{\C}{\mathbb{C}}
\newcommand{\Fb}{\mathbb{F}}
\newcommand{\F}{\mathbb{F}}
\newcommand{\Gb}{\mathbb{G}}
\newcommand{\Nb}{\mathbb{N}}
\newcommand{\Pb}{\mathbb{P}}
\newcommand{\Qb}{\mathbb{Q}}
\newcommand{\Q}{\mathbb{Q}}
\newcommand{\Rb}{\mathbb{R}}
\newcommand{\Zb}{\mathbb{Z}}
\newcommand{\Z}{\mathbb{Z}}
\newcommand{\Fqbar}{\overline{\Fb}_q}
\newcommand{\et}{{et}}
\DeclareMathOperator{\Res}{{\mathcal R}}
\DeclareMathOperator{\Frob}{Frob}
\DeclareMathOperator{\Ind}{Ind}
\DeclareMathOperator{\Hom}{Hom}
\DeclareMathOperator{\SU}{SU}
\DeclareMathOperator{\image}{image}
\DeclareMathOperator{\Gal}{Gal}
\DeclareMathOperator{\Poly}{Poly}
\newcommand{\R}{\mathcal{R}}
\newcommand{\card}{\vert \R_n(\F_q)\vert}
\newcommand{\Fr}{\mathcal{F}}
\renewcommand{\epsilon}{\varepsilon}
\newcommand{\un}{\underline{n}}
\title{Topology and arithmetic of resultants, II:  the \\
resultant $=1$ hypersurface}
\author{Benson Farb and Jesse Wolfson \thanks{B.F. is supported in part by NSF Grant Nos. DMS-1105643 and DMS-1406209. J.W. is supported in part by NSF Grant No. DMS-1400349.}\\
With an appendix by Christophe Cazanave}
\begin{document}
\date{\today}
\maketitle
\begin{abstract}
We consider the moduli space $\Res_n$ of pairs of monic, degree $n$ polynomials whose resultant equals $1$.  We relate the topology of these algebraic varieties to their geometry and arithmetic. In particular, we compute their \'{e}tale cohomology, the associated eigenvalues of Frobenius, and the cardinality of their set of $\F_q$-points. When $q$ and $n$ are coprime, we show that the \'etale cohomology of $\Res_{n/\Fqbar}$ is pure, and of Tate type if and only if $q\equiv 1$ mod $n$. We also deduce the values of these invariants for the finite field counterparts of the moduli spaces $\Mc_n$ of $\SU(2)$ monopoles of charge $n$ in $\Rb^3$, and the associated moduli space $X_n$ of strongly centered monopoles.

An appendix by Cazanave gives an alternative and elementary computation of the point counts.
\end{abstract}

\section{Introduction}
Consider two monic, degree $n\geq 1$ complex polynomials
\[ \phi(z)=z^n+a_{n-1}z^{n-1}+\cdots +a_1z+a_0\]
and \[\psi(z)=z^n+b_{n-1}z^{n-1}+\cdots +b_1z+b_0.\]

A beautiful classical fact is that the condition for $\phi$ and $\psi$ to have a common root is polynomial in the coefficients $a_i$ and $b_j$.  More precisely, $\phi$ and $\psi$ have a common
 root if and only if
 \begin{equation}
 \label{eq:res=0}
\Res(\phi,\psi):=\Res(a_0,\ldots a_{n-1},b_0,\ldots ,b_{n-1})= 0
\end{equation}
where $\Res$ is the {\em resultant}, given by
\[
\Res(\phi,\psi)=\det
\left[
\begin{array}{cccccccc}
a_0&a_1& \cdots &a_{n-1} & 1&0 & \cdots &0  \\
0&a_0&\cdots&\cdots&a_{n-1}& 1& \cdots & 0\\
\vdots&\vdots&\vdots&\vdots&\vdots&\vdots &\vdots &\vdots\\
b_0&b_1&\cdots&b_{n-1}&1&0&\cdots &0\\
0&b_0&\cdots&\cdots&b_{n-1}&1&\cdots &0\\
\vdots&\vdots&\vdots&\vdots&\vdots&\vdots&\vdots& \vdots\\
0&0&\cdots& b_0&\cdots&\cdots&b_{n-1}&1
\end{array}
\right]
\]

This is a homogeneous polynomial of degree $n$ in the $a_i$ and similarly in the
$b_i$.    It has integer coefficients.    Fix a field $k$, and denote by $\A^n$ the affine space over
$k$.  The resultant can be thought of as a map
\[\Res:\A^{2n}\to\A^1\]
from the space $\A^{2n}$ of pairs of monic, degree $n$ polynomials to $k$.  The {\em resultant locus} $\Mc_n:=\A^{2n}\setminus \Res^{-1}(0)$ is a classically studied object.  It is isomorphic to the moduli space of degree $n$ rational maps $\Pb^1\to\Pb^1$ taking $\infty$ to 1.   Harder to understand is the {\em ``resultant $=1$'' hypersurface} $\Res_n:=\Res^{-1}(1)$ in $\A^{2n}$.

Since the polynomial $\Res$ has integer coefficients, we can extend scalars to $\C$ and consider the complex points $\Res_n(\C)$, and we can also reduce modulo $p$ for any prime $p$.  This gives a variety defined over $\F_p$, and
for any positive power $q=p^d$ we can consider both the $\F_q$-points as well as the $\Fqbar$-points of $\Res_n$, where $\Fqbar$ is the algebraic closure of $\F_q$.   Three of the most fundamental arithmetic invariants attached to a such a variety $\Res_n$ are:

\begin{enumerate}
\item The cardinality $|\Res_n(\F_q)|$.

\item The \'{e}tale cohomology $H_{\et}^*(\Res_{n/\Fqbar};\Q_\ell)$, where $\ell$ is a prime not dividing $q$.

\item The eigenvalues of the (geometric) Frobenius morphism \[\Frob_q: H_{\et}^*(\Res_{n/\Fqbar};\Q_\ell)\to H_{\et}^*(\Res_{n/\Fqbar};\Q_\ell).\]
\end{enumerate}

Our main theorems compute the  \'{e}tale cohomology of $\Res_n$ as well as the associated eigenvalues of Frobenius, building on the topological work of Segal and Selby \cite{SS}. We then apply this to compute the cardinality of finite field versions of these moduli spaces; that is, of $\Res_n(\F_q)$ and $X_n(\F_q)$, where $\F_q$ is a finite field.

There is a canonical $\mu_n$-action on $\Res_n$; see Section \ref{sec:action}.  This induces a $\mu_n$-action on $H_{\et}^*(\Res_{n/\Fqbar};\Q_\ell)$.  Since $\Q_\ell$ has characteristic $0$, it follows that $H_{\et}^*(\Res_{n/\Fqbar};\Q_\ell)\otimes_{\Q_\ell}\C$ decomposes into a direct sum of irreducible representations of $\mu_n$.  The irreducible representations of $\mu_n$ are parametrized by integers $m$ with $0\leq m<n$,  corresponding to $\xi\mapsto e^{2\pi im/n}\xi$; denote this irreducible representation by $V_m$.  Let $H_{\et}^*(\Res_n)_m$ denote the isotypic component of $H_{\et}^*(\Res_n;\Q_\ell)\otimes_{\Q_\ell}\C$ corresponding to $V_m$.  Denote by $H_{\et}^i(\Res_{n/\Fqbar};\Q_\ell)^{\mu_n}$ the subspace of $\mu_n$-fixed vectors.

Denote by $\Qb_\ell(-i)$ the rank 1 $\Gal(\Fqbar/\Fb_q)$-representation on which Frobenius acts by $q^i$. Also, recall that the \'etale cohomology of a variety $X$ is \emph{pure} if the absolute values of the eigenvalues of $\Frob_q$ on $H^i_{\et}$ are all $q^{\frac{i}{2}}$; as Deligne showed, this is always the case when $X$ is smooth and projective. The group $H^i_{\et}(X_{\Fqbar};\Q_\ell)$ is of \emph{Tate type} if the eigenvalues of $\Frob_q$ are all equal to powers of $q$.

\begin{theorem}[{\bf \'{E}tale cohomology of $\Res_n$}]
\label{theorem:monopole:main}
Let $n\geq 1$.  For all but finitely many primes $p$ not dividing $n$, and for all positive powers $q=p^d$:
\begin{enumerate}
    \item The $\ell$-adic cohomology of $\Res_{n/\Fqbar}$ is pure.
    \item The $\mu_n$-invariants are concentrated in degree 0:
        \begin{equation*}
            H_{\et}^i(\Res_{n/\Fqbar};\Q_\ell)^{\mu_n}\iso\left\{
                \begin{array}{ll}
                    \Q_\ell(0)&i=0\\
                    0&i\neq 0
                \end{array}\right.
        \end{equation*}
    \item $H^{2i+1}_{\et}(\Res_{n/\Fqbar};\Q_\ell)=0$ for all $i$.
    \item For $i>0$, $H^{2i}_{\et}(\Res_{n/\Fqbar};\Q_\ell)$ is nonzero if and only if $i<n$ and $(n-i)|n$. In this case, it is of Tate type if and only if $q\equiv 1 \text{ mod }\frac{n}{n-i}$. More precisely, let $\Oc_a:=\{1\le m\le n~|~(m,n)=a\}$. Then we have a $\Frob_q$-invariant decomposition
        \begin{equation*}
            H_{\et}^{2i} (\Res_{n/\Fqbar};\Q_\ell)\otimes_{\Q_\ell}\C\cong \left(\bigoplus_{m\in\Oc_{n-i}} \C\right)\otimes\Q_\ell(-i)
        \end{equation*}
        where $\Frob_q$ acts on the direct sum by $\C_m\to^1 \C_{qm\text{ mod }n}$. In particular:
        \begin{enumerate}
            \item If $(n-i)\nmid n$, then the rank of $H^{2i}_{\et}(\Res_{n/\Fqbar};\Q_\ell)$ is 0. If $i<n$ and $(n-i)|n$, then $H^{2i}_{\et}(\Res_{n/\Fqbar};\Q_\ell)$ has rank equal to Euler's totient function $\phi(\frac{n}{n-i})$.
            \item The $\Frob_q$ action is given by the permutation representation coming from the action of $\Frob_q$ on $\Oc_{n-i}$.
            \item When $q\not\equiv 1$ mod $\frac{n}{n-i}$, the trace of this representation is 0.
            \item When $q\equiv 1$ mod $\frac{n}{n-i}$, $\Frob_q$ acts on $H^{2i}_{\et}(\Res_{n/\Fqbar};\Q_\ell)$ by multiplication by $q^i$, and the trace is $q^i\cdot\phi(\frac{n}{n-i})$.
        \end{enumerate}
\end{enumerate}
\end{theorem}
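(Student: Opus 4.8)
The plan is to exploit the work of Segal--Selby together with the $\mu_n$-action to bootstrap from known topology over $\C$ to étale cohomology over $\Fqbar$, tracking the Frobenius action via a comparison with the $\mu_n$-action. First I would recall (from Segal--Selby \cite{SS}) the homotopy type of $\Res_n(\C)$: there is a $\mu_n$-equivariant identification with a space whose rational cohomology is known, and in particular the additive structure of $H^*(\Res_n(\C);\Q)$ is concentrated in even degrees with ranks matching the combinatorial formula involving $\Oc_{n-i}$ and $\phi(\tfrac{n}{n-i})$. Together with the smoothness of $\Res_n$ over $\Z[\tfrac1n]$ (the resultant is a submersion onto $\A^1\setminus 0$ there, as can be checked by differentiating the determinant), the smooth and proper base change theorems plus the Artin comparison isomorphism give, for all but finitely many $p\nmid n$, an isomorphism $H^i_\et(\Res_{n/\Fqbar};\Q_\ell)\otimes\C\iso H^i(\Res_n(\C);\C)$ as $\mu_n$-representations. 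This immediately yields parts (3) and the rank statements in (4a), and reduces (2) to the topological statement that the $\mu_n$-invariant cohomology of $\Res_n(\C)$ is $\Q$ in degree $0$ — which should follow from the fact that $\Res_n/\mu_n$ has the rational cohomology of a point, or directly from the Segal--Selby model.

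Next I would pin down the Frobenius eigenvalues. The key principle is that on a smooth variety $X/\F_q$ with $X_{\C}$ having cohomology generated in degree $2$ by algebraic classes — or more robustly, using that $\Res_n$ is a hypersurface complement / affine variety with cohomology of "geometric origin" — weight considerations force $H^{2i}$ to have weights $\geq 2i$, while smoothness (Poincaré--Lefschetz duality against the smooth projective situation, or the Segal--Selby identification of $\Res_n$ up to homotopy with a space admitting an algebraic model that is a quotient of something proper) forces weights $\leq 2i$; hence purity, giving part (1), and each eigenvalue has absolute value $q^i$. To upgrade from "weight $2i$" to "the eigenvalue is a root of unity times $q^i$", and to identify the permutation action, I would use the $\mu_n$-action: $\Frob_q$ and the generator $\zeta$ of $\mu_n$ act on $H^{2i}_\et(\Res_{n/\Fqbar};\Q_\ell)\otimes\C$, and one has the twisted-commutation relation $\Frob_q\circ \zeta = \zeta^{q}\circ \Frob_q$ coming from the fact that the $\mu_n$-action is defined over $\F_q$ only after the base change by $x\mapsto x^{1/n}$-type twist (concretely, the generator of $\mu_n\subset\Fqbar$ is moved by $\Frob_q$ to its $q$-th power). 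This relation forces $\Frob_q$ to permute the isotypic lines $\C_m$ ($m\in\Oc_{n-i}$, since those are exactly the characters appearing in degree $2i$) according to $m\mapsto qm \bmod n$, and on each line it acts by a scalar of absolute value $q^i$; comparing with the real structure / self-duality of the cohomology and the fact that the permutation orbits are cyclic forces that scalar to be exactly $q^i$ (a cycle of length $r$ gives $\Frob_q^r$ acting by a scalar which, by purity applied to $\F_{q^r}$, must be $q^{ir}$, and the natural normalization — e.g. that the class is defined over the subfield where its line is fixed — makes the single-step scalar $q^i$ with no extra root of unity). This gives the displayed decomposition in (4), and (4b), (4c), (4d) are then immediate: the trace of a permutation representation of $\Frob_q$ on $\Oc_{n-i}$ is the number of fixed points times $q^i$, which is $0$ unless $\Frob_q$ (i.e.\ multiplication by $q$) fixes every element of $\Oc_{n-i}$, equivalently $q\equiv 1\bmod \tfrac{n}{n-i}$, in which case all $\phi(\tfrac{n}{n-i})$ elements are fixed.

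The main obstacle will be the Frobenius bookkeeping in part (4): establishing the twisted-commutation relation $\Frob_q\zeta=\zeta^q\Frob_q$ precisely (requiring care about how the $\mu_n$-action is/is not defined over $\F_q$ — I expect one must work with a form of $\Res_n$ over $\F_q$ and the diagonal $\mu_n\subset \F_{q(\zeta_n)}$, or equivalently descend along the $n$-fold covering), and then ruling out a spurious root of unity in the single-step scalar so that one gets exactly $q^i$ rather than $q^i$ times a character. I would resolve the latter by producing explicit algebraic cycle classes (or using the Segal--Selby cell structure, whose cells are complex-algebraic and defined over cyclotomic fields, together with their Frobenius-conjugation behavior) generating the isotypic lines, so that the Galois action on the lines is literally the cyclotomic permutation action tensored with the Tate twist $\Q_\ell(-i)$. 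The purity input (part (1)) I would treat either via the comparison with a smooth compactification whose boundary is simple enough (which Segal--Selby's model may supply), or by citing the general principle that a smooth variety whose complex cohomology is pure of Hodge--Tate type has pure $\ell$-adic cohomology in the relevant range; establishing this cleanly, and in particular the lower weight bound, is where I expect to spend the most effort.
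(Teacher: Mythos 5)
Your strategy for parts (2), (3), (4a)--(4c) --- comparison/base change to import Segal--Selby's ranks and $\mu_n$-module structure, then the twisted commutation $\Frob_q\circ\zeta=\zeta^q\circ\Frob_q$ to organize the isotypic lines into $\Frob_q$-orbits indexed by $\Oc_{n-i}$ --- coincides in substance with the paper's Steps 1, 4, 5; and your sketch of (2) (that $\Res_n/\mu_n\simeq\Mc_n/\Gb_m$ has the cohomology of a point) is indeed what the paper establishes, via the Leray spectral sequence of the $\Gb_m$-torsor $\Mc_n\to\Mc_n/\Gb_m$ together with the computation of $H^*_\et(\Mc_n)$ from \cite{FW}. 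The genuine gap is in parts (1) and (4d): purity, and the identification of the single-step Frobenius scalar on each orbit as exactly $q^i$. Neither of your proposed mechanisms closes it. The ``general principle that complex Hodge--Tate purity implies $\ell$-adic purity'' is not a theorem for non-proper smooth varieties (the paper's own remark after the theorem flags that purity does not follow from Deligne here). Producing ``explicit algebraic cycle classes generating the isotypic lines'' cannot work in general, because $H^{2i}_\et(\Res_n)$ is usually \emph{not} of Tate type: when $q\not\equiv 1\bmod\frac{n}{n-i}$ the isotypic lines are not individually $\Frob_q$-stable, so they cannot be spanned by cycle classes defined over $\F_q$. And your fallback that $\Frob_q^r$ on a length-$r$ orbit ``by purity'' acts by $q^{ir}$ is circular, since purity is precisely what is at stake.

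The missing idea is the paper's Step 3. Segal--Selby's closed subvarieties $Y_{n,a}\subset\Res_n$ (the locus where $\psi$ is an $(n/a)$-th power) carry an isomorphism $Y_{n,a}\iso\mu_n\times_{\mu_a}(\Res_a\times\A^{n-a})$, which the paper shows in Proposition~\ref{prop:SS} is defined over $\F_q$. Together with the vanishing of the $a$-isotypic part of $H^*_\et(\Res_n-Y_{n,a})$ (Lemma~\ref{lemma:Uvanish}, imported from \cite{SS} via the comparison/base change theorem), the long exact sequence of the pair yields a Gysin-type isomorphism
\[
H^j_\et(\Res_{n/\Fqbar})_a\iso H^{j-2(n-a)}_\et(\Res_{a/\Fqbar})^{\mu_a}\otimes\Q_\ell(a-n),
\]
and part (2) applied to $\Res_a$ then forces the right side to be $\Q_\ell(a-n)$, concentrated in $j=2(n-a)$. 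This is the step that produces the Tate twist with the correct normalization, yields purity essentially for free, and fixes the Frobenius scalar to be $q^i$ with no spurious root of unity. Without it you have the ranks, the orbit combinatorics, and the weight lower bound $\geq 2i$ from Weil~II applied to the smooth variety $\Res_n$, but neither the weight upper bound nor the exact twist.
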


\begin{remark}\mbox{}
    \begin{enumerate}
        \item The variety $\Res_{n/\Fqbar}$ is smooth (so long as $(q,n)=1$), but not projective, and thus purity does not follow from Deligne.
        \item The failure of $H^{2i}_{\et}(\Res_{n/\Fqbar};\Q_\ell)$ to be Tate type is precisely the failure of the group-scheme $\mu_{\frac{n}{n-i}}$ to be Tate type over $\F_q$. See below for details.
    \end{enumerate}
\end{remark}

\begin{corollary}[\bf{Isotypic decomposition when $q\equiv 1$ mod $n$}]
    Let $n\ge 1$ as above. Then for $q\equiv 1$ mod $n$, the isotypic decomposition of $H^\ast_{\et}(\Res_{n/\Fqbar};\Q_\ell)$ is $\Frob_q$-invariant, and for any $1\leq m\leq n-1$:
    \[H_{\et}^{2i}(\Res_{n/\Fqbar})_m\iso\left\{
        \begin{array}{ll}
            \Q_{\ell}(-i)\otimes\Cb & \gcd(m,n)=n-i\\
            0&\text{else}
        \end{array}\right.
    \]
    Thus, for each $i\geq 0$:
    \[
        H_{\et}^{2i}(\Res_{n/\Fqbar};\Q_\ell)\iso \bigoplus _{\{m\in\Oc_{n-i}\}}\Q_{\ell}(-i)
    \]
\end{corollary}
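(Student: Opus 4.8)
The plan is to read the corollary off Theorem~\ref{theorem:monopole:main} by specializing to the congruence $q\equiv 1 \bmod n$; no new geometry is needed. The one elementary observation required is that $q\equiv 1\bmod n$ forces $qm\equiv m\bmod n$ for every integer $m$, so that the permutation action of $\Frob_q$ on each set $\Oc_a$ (and in particular on $\Oc_{n-i}$) is the identity.

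Granting this, I would argue as follows. By part~(4) of Theorem~\ref{theorem:monopole:main}, $H^{2i}_{\et}(\Res_{n/\Fqbar};\Q_\ell)\otimes_{\Q_\ell}\C$ is, as a $\mu_n$-representation, the direct sum over $m\in\Oc_{n-i}$ of the line $\C_m$ on which $\mu_n$ acts through the character $V_m$, tensored with $\Q_\ell(-i)$; equivalently, the $V_m$-isotypic component $H^{2i}_{\et}(\Res_{n/\Fqbar})_m$ is one copy of $\Q_\ell(-i)\otimes\C$ when $\gcd(m,n)=n-i$ and vanishes otherwise. Since $q\equiv 1\bmod n$, the map $\C_m\to\C_{qm\bmod n}$ of part~(4) is the identity self-map of $\C_m$; combined with the $\Frob_q$-action $q^i$ on $\Q_\ell(-i)$, this shows that $\Frob_q$ preserves every isotypic component $H^{2i}_{\et}(\Res_{n/\Fqbar})_m$ and acts on it by $q^i$. (This is the case of part~(4)(d) under the weaker hypothesis $q\equiv 1\bmod\tfrac{n}{n-i}$, which follows from $q\equiv 1\bmod n$ because $\tfrac{n}{n-i}\mid n$.) Hence the isotypic decomposition is $\Frob_q$-invariant and $H^{2i}_{\et}(\Res_{n/\Fqbar})_m\iso \Q_\ell(-i)\otimes\C$ exactly when $\gcd(m,n)=n-i$, which is the first displayed formula. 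Summing over $1\le m\le n-1$, appending the $\mu_n$-invariant part $H^0_{\et}=\Q_\ell(0)$ from part~(2) (which is itself $\bigoplus_{m\in\Oc_n}\Q_\ell(0)$ since $\Oc_n=\{n\}$ and the twist there is trivial), and using $H^{\mathrm{odd}}_{\et}=0$ from part~(3), yields $H^{2i}_{\et}(\Res_{n/\Fqbar};\Q_\ell)\iso\bigoplus_{m\in\Oc_{n-i}}\Q_\ell(-i)$ for all $i\ge 0$.

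There is no real obstacle here: the content is entirely in Theorem~\ref{theorem:monopole:main}, and the corollary amounts to observing that for $q\equiv 1\bmod n$ the Frobenius permutation of $\Oc_{n-i}$ collapses to the identity, so the $\C_m$ summands — which are the isotypic components — become individually $\Frob_q$-stable with Frobenius acting by the scalar $q^i$. The only step requiring a moment's care is the identification, already implicit in part~(4) of the theorem, of the $m$-indexed summand with the $V_m$-isotypic component, and hence the compatibility of the $\Frob_q$- and $\mu_n$-actions on it; once that bookkeeping is in place the rest is immediate.
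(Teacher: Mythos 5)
Your proposal is correct and is essentially the paper's own route: the corollary is read off directly from Theorem~\ref{theorem:monopole:main}, using that $q\equiv 1\bmod n$ makes the permutation $m\mapsto qm\bmod n$ of $\Oc_{n-i}$ trivial, so each $V_m$-isotypic summand (identified with the $\C_m$-summand in Step 3 of the paper's proof) is individually $\Frob_q$-stable with Frobenius acting by $q^i$. Your bookkeeping of the $i=0$ and odd-degree cases matches the paper's parts (2) and (3), so nothing further is needed.
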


Note that for $n$ prime and $q\equiv 1$ mod $n$,
\[H_{\et}^i(\Res_{n/\Fqbar};\Q_\ell)
\left\{
\begin{array}{ll}
\Q_\ell(1-n)^{\oplus (n-1)}&i=2n-2\\
\Q_{\ell}(0)&i=0\\
0&\text{else}
\end{array}\right.
\]

Theorem~\ref{theorem:monopole:main} gives not only the \'{e}tale cohomology of the varieties $\Res_n$, but it also computes the eigenvalues of Frobenius acting on these varieties over $\Fqbar$.  Applying the Grothendieck-Lefschetz trace formula, we conclude the following.

\begin{corollary}[{\bf Cardinality of $\Res_n(\F_q)$}]
\label{cor:pointcount}
    Let $n\ge 1$. Then for all but finitely many primes $p\nmid n$, for each positive power $q$ of $p$, let
    \begin{equation*}
        \mathcal{F}(q,n):\{a~:~ a|n \text{ and } q\equiv 1\text{ mod} \frac{n}{a}\}.
    \end{equation*}
    Then
    \begin{equation*}
        |\Res_n(\F_q)|=q^{2n-1}\left(\sum_{a\in\mathcal{F}(q,n)} \phi(\frac{n}{a})\cdot q^{a-n}\right).
    \end{equation*}
    In particular, for $(q-1,n)=1$,
    \begin{equation*}
        |\Res_n(\F_q)|=q^{2n-1},
    \end{equation*}
    and for $q\equiv 1$ mod $n$,
    \begin{equation*}
        |\Res_n(\F_q)|=q^{2n-1}\left(\sum_{i=0}^{n-1}\phi(\frac{n}{n-i})\cdot q^{-i}\right).
    \end{equation*}
    where we define Euler's totient function to be identically 0 on $\Qb\setminus\Nb$. When $n$ is prime and $q\equiv 1$ mod $n$, this gives
    \begin{equation*}
        |\Res_n(\F_q)|=q^{2n-1}+(n-1)q^{n}.
    \end{equation*}
\end{corollary}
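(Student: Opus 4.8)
The plan is to extract the point count directly from Theorem~\ref{theorem:monopole:main} by means of the Grothendieck--Lefschetz trace formula, so that the remaining content is bookkeeping. For the primes $p$ permitted in that theorem we have $(q,n)=1$, so $\Res_n$ is smooth over $\F_q$ of pure dimension $d=2n-1$, and it is affine, being a hypersurface in $\A^{2n}$. Hence
\[
|\Res_n(\F_q)|=\sum_{k\ge 0}(-1)^k\,\tr\!\big(\Frob_q\mid H^k_{c,\et}(\Res_{n/\Fqbar};\Q_\ell)\big).
\]
First I would rewrite this in terms of ordinary \'etale cohomology via Poincar\'e duality: as $\Frob_q$-modules one has $H^k_{c,\et}(\Res_{n/\Fqbar};\Q_\ell)\iso H^{2d-k}_{\et}(\Res_{n/\Fqbar};\Q_\ell)^\vee\otimes\Q_\ell(-d)$. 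By Theorem~\ref{theorem:monopole:main}(3) the odd cohomology vanishes, so the only surviving terms have $k=2(2n-1)-2i$ with either $i=0$ or $0<i<n$ and $(n-i)\mid n$; in particular every such $k$ is even and all signs in the alternating sum equal $+1$.

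Next I would compute the individual traces. By Theorem~\ref{theorem:monopole:main}(4), $\Frob_q$ acts on $H^{2i}_{\et}(\Res_{n/\Fqbar};\Q_\ell)\otimes_{\Q_\ell}\C$ as the permutation representation of $\langle\Frob_q\rangle$ on the set $\Oc_{n-i}$, tensored with $\Q_\ell(-i)$. Dualizing and applying the twist $\Q_\ell(-d)$ contributes a factor $q^{d-i}=q^{2n-1-i}$; moreover the dual of the permutation representation of a finite cyclic group is again a permutation representation with the \emph{same} value of the character at $\Frob_q$, since the number of fixed points of $m\mapsto qm$ on $\Oc_{n-i}$ equals that of $m\mapsto q^{-1}m$. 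By parts (c) and (d) of the theorem this character value is $\phi\!\big(\tfrac{n}{n-i}\big)$ when $q\equiv 1\bmod\tfrac{n}{n-i}$ and $0$ otherwise (with the convention that $\phi$ vanishes off $\Nb$). Therefore
\[
\tr\!\big(\Frob_q\mid H^{2(2n-1)-2i}_{c,\et}(\Res_{n/\Fqbar};\Q_\ell)\big)=q^{2n-1-i}\cdot\phi\!\big(\tfrac{n}{n-i}\big)\qquad\text{if } q\equiv 1\bmod\tfrac{n}{n-i},
\]
and $0$ otherwise.

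Summing over the admissible $i$ and substituting $a=n-i$ — so that $a$ runs precisely over the divisors of $n$ with $q\equiv 1\bmod\tfrac{n}{a}$, i.e.\ over $\mathcal{F}(q,n)$ — turns the exponent $2n-1-i$ into $n-1+a=(2n-1)+(a-n)$ and yields exactly
\[
|\Res_n(\F_q)|=q^{2n-1}\Big(\sum_{a\in\mathcal{F}(q,n)}\phi(\tfrac{n}{a})\,q^{a-n}\Big).
\]
Finally I would read off the special cases. If $(q-1,n)=1$, then $\tfrac{n}{a}\mid q-1$ forces $\tfrac{n}{a}\mid(n,q-1)=1$, so $\mathcal{F}(q,n)=\{n\}$ and only the term $\phi(1)q^{2n-1}=q^{2n-1}$ remains. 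If $q\equiv 1\bmod n$, then $\mathcal{F}(q,n)$ is the set of all divisors of $n$, and reindexing by $i=n-a$ gives $q^{2n-1}\sum_{i=0}^{n-1}\phi(\tfrac{n}{n-i})q^{-i}$; when in addition $n$ is prime, only $a=1$ and $a=n$ occur, giving $q^{2n-1}+\phi(n)q^{n}=q^{2n-1}+(n-1)q^{n}$.

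The argument has no serious obstacle: essentially all the work lives in Theorem~\ref{theorem:monopole:main}, and what remains is careful reindexing. The one point deserving attention is the passage through Poincar\'e duality together with the fact that taking $\Frob_q$-duals leaves the relevant traces unchanged — equivalently, that the eigenvalues of $\Frob_q$ on $H^{2i}_{\et}$ are algebraic numbers of absolute value $q^i$ permuted by complex conjugation, so that their sum is invariant under $\lambda\mapsto q^{2i}/\lambda$.
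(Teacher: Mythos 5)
Your proof is correct and takes essentially the same approach as the paper, which simply invokes the Grothendieck--Lefschetz trace formula after Theorem~\ref{theorem:monopole:main}; you have filled in the Poincar\'e duality bookkeeping and the reindexing $a=n-i$ that the paper leaves implicit, and the observation that dualizing a permutation representation of $\langle\Frob_q\rangle$ preserves the trace is exactly the right point to flag.
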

We also include an appendix, by Cazanave, in which the above point count (as well as the case when $(q,n)\neq 1$) is deduced by elementary means, i.e. without using \'etale cohomology.

\bigskip
The results of this paper provide an example of a broader program applying this viewpoint to make concrete calculations for various moduli spaces. For more, see \cite{FW}. The varieties $\Res_n$ are closely related to some moduli spaces studied in physics, namely moduli spaces of magnetic monopoles.  In \S\ref{section:monopoles} we apply the results above to deduce similar theorems for these moduli spaces.

\paragraph{Remark on the proofs.}  One novelty of the proofs in this paper is that we obtain information about various algebraic varieties $Z$ defined over $\Z$ by traversing, in different directions, a ``triangle'' of viewpoints: arithmetic ($|Z(\F_q)|$); topological ($H^*(Z(\C);\Q)$); and geometric ($\Frob_q$ acting on $H_{\et}^*(Z_{/\Fqbar};\Q_\ell)$). As an example, the logic of the starting point of the proofs of Theorem~\ref{theorem:monopole:main} and Corollary~\ref{cor:pointcount} is as follows:

\bigskip
\[
\begin{array}{lcc}
\text{Compute}\  |\Mc_n(\F_q)|& {\Longrightarrow}{\Longrightarrow} & \text{deduce $H_{\et}^*(\Mc_{n/\Fqbar};\Q_\ell)$}\\
& \text{Grothendieck-Lefschetz}&\text{and e-values of $\Frob_q$}\\
&\text{trace formula}&\text{on it}\\
&&\\
&&\ \ \ \ \ \ \ \ \ \ \ \Downarrow \text{transfer}\\
&&\Downarrow\\
&&\\
\text{obtain $|\Res_n(\F_q)|$}&\Longleftarrow \Longleftarrow &\text{Compute $H_{\et}^*(\Res_{n/\Fqbar};\Q_\ell)$}\\
& \text{Grothendieck-Lefschetz}&\text{and e-values of $\Frob_q$}\\
&\text{trace formula}&
\end{array}
\]
The upper horizontal deduction is a special case of the results of the first paper in this series \cite{FW}. We take this as a starting point in order to make the remaining deductions.

\paragraph{Acknowledgements.} We are grateful to Sasha Beilinson, Weiyan Chen, Jordan Ellenberg, Matt Emerton, Nir Gadish, Jackson Hance, Sean Howe, Peter May, Joel Specter, Shmuel Weinberger, and Melanie Wood for helpful conversations. We thank Joe Silverman for helpful comments on an earlier draft. Finally, we would especially like to thank Christophe Cazanave, not only for contributing the appendix, but also for his many comments, corrections and suggestions; they have greatly improved this paper.

\section{Proof of Theorem~\ref{theorem:monopole:main}}
\label{section:monopole}

In this section we prove Theorem~\ref{theorem:monopole:main}.  Throughout this section we fix $n\geq 1$ and a prime power $q=p^d, d\geq 1$.

\subsection{Comparison and base change}
We will use information about the singular cohomology of the complex points of a variety to obtain information about the \'etale cohomology of the variety over $\Fqbar$, for all prime powers $q$ of all but finitely many primes $p$. This rests on a pair of results. The first, due to Artin \cite{Ar}, establishes an isomorphism between the singular cohomology of the complex points of a variety and the \'etale cohomology of the complex variety. The second, following from Deligne's ``Theorem de Finitude'' \cite[Theorem 6.2]{De}, establishes that, for any variety defined over $\Zb$, for all prime powers $q$ of all but finitely many primes $p$, the compactly supported \'etale cohomology of the associated complex variety is isomorphic to that of the the variety over $\Fqbar$. Together with Poincar\'e Duality, these give the following.

\begin{theorem}[{\bf Comparison and base change}]
\label{theorem:basechange}
Let $X$ be a smooth scheme over $\Z$.  Then for all but finitely many primes $p$, and all positive powers $q$ of $p$, there is an isomorphism
\[H^i_{\et}(X_{/\Fqbar};\Q_\ell)\otimes_{\Qb_\ell} \C \iso H^i(X(\C);\C).\]
\end{theorem}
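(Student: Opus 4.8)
The plan is to relate both sides to the \emph{compactly supported} $\ell$-adic cohomology of $X_{/\C}$, since that is the object whose behaviour under reduction mod $p$ is controlled by Deligne's finiteness theorem, and then to return to ordinary cohomology by Poincar\'e duality. Fix the prime $\ell$. Since $X$ is smooth and of finite type over $\Z$, it is flat, so after inverting a suitable integer $N$ we may assume $f\colon X\to\Spec\Z[1/N\ell]=:\Spec R$ is a smooth morphism; in particular $X_{/\C}$ and $X_{/\Fqbar}$ (for $p\nmid N\ell$) are smooth varieties, of dimension on each connected component equal to the relative dimension of $f$ there. All the tools below --- \'etale and singular cohomology, Poincar\'e duality, and the specialization isomorphisms --- respect the decomposition of $X$ into connected components, so I may assume $X$ connected and hence that $X_{/\C}$ and $X_{/\Fqbar}$ are smooth of pure dimension $d$. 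The finitely many excluded primes will be those dividing $N\ell$.

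Next, Artin's comparison theorem \cite{Ar} furnishes, for the complex variety $X_{/\C}$, natural isomorphisms $H^i_{\et}(X_{/\C};\Z/\ell^k)\iso H^i(X(\C);\Z/\ell^k)$, together with the analogue for compactly supported cohomology; passing to the inverse limit over $k$ and tensoring up to $\C$ gives
\[
H^i_{\et}(X_{/\C};\Q_\ell)\otimes_{\Q_\ell}\C\iso H^i(X(\C);\C),
\qquad
H^i_{c,\et}(X_{/\C};\Q_\ell)\otimes_{\Q_\ell}\C\iso H^i_c(X(\C);\C).
\]
So it is enough to produce an isomorphism of $\Q_\ell$-vector spaces $H^i_{\et}(X_{/\Fqbar};\Q_\ell)\iso H^i_{\et}(X_{/\C};\Q_\ell)$, or equivalently (by the previous line, and by Poincar\'e duality below) of the compactly supported groups.

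For the latter, apply Deligne's ``Th\'eor\`eme de finitude'' \cite[Th.~6.2]{De}: the higher direct images $R^if_!\Q_\ell$ are constructible sheaves on $\Spec R$, hence lisse on a dense open, and after enlarging $N$ we may take them lisse on all of the connected scheme $\Spec R$; consequently all of their geometric stalks are isomorphic as $\Q_\ell$-vector spaces. By the base change theorem for compactly supported cohomology ($Rf_!$), the stalk of $R^if_!\Q_\ell$ at a geometric point $\Spec\C\to\Spec R$ is $H^i_{c,\et}(X_{/\C};\Q_\ell)$, and its stalk at a geometric point $\Spec\Fqbar\to\Spec R$ (for $p\nmid N\ell$, $q$ a power of $p$) is $H^i_{c,\et}(X_{/\Fqbar};\Q_\ell)$, whence $H^i_{c,\et}(X_{/\Fqbar};\Q_\ell)\iso H^i_{c,\et}(X_{/\C};\Q_\ell)$. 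Since both $X_{/\C}$ and $X_{/\Fqbar}$ are smooth of pure dimension $d$ and $\ell\neq p$, Poincar\'e duality in \'etale cohomology identifies $H^i_{\et}$ with the $\Q_\ell$-linear dual of $H^{2d-i}_{c,\et}$ up to the Tate twist $(-d)$, which contributes only an invisible one-dimensional factor after $\otimes_{\Q_\ell}\C$; chaining the duality over $\Fqbar$, the comparison of compactly supported groups just obtained, the duality over $\C$, and Artin's theorem yields the desired isomorphism $H^i_{\et}(X_{/\Fqbar};\Q_\ell)\otimes_{\Q_\ell}\C\iso H^i(X(\C);\C)$. A single finite set of bad primes suffices for all $i$ at once since the cohomology of a finite-type scheme vanishes outside finitely many degrees.

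The one substantive input is the reduction mod $p$: granted the good primes, the rest is the standard formal package (Artin's comparison and Poincar\'e duality), whereas the assertion that compactly supported $\ell$-adic cohomology does not change under reduction mod $p$ for almost all $p$ is exactly the constructibility half of Deligne's finiteness theorem, which I would invoke as a black box. The only other point that needs attention is the bookkeeping in the first paragraph --- upgrading ``smooth over $\Z$'' to ``smooth over $\Z[1/N]$ with equidimensional geometric fibres'' --- so that Poincar\'e duality is applied with the same dimension $d$ over $\C$ and over $\Fqbar$, and so that the isomorphism holds in every degree.
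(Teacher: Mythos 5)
Your proposal is correct and follows essentially the same route the paper indicates: Artin's comparison theorem over $\C$, invariance of compactly supported $\ell$-adic cohomology under reduction mod $p$ for almost all $p$ (via constructibility/generic lisse-ness coming from Deligne's Th\'eor\`eme de finitude), and Poincar\'e duality on the smooth fibres to pass back to ordinary cohomology. Your write-up merely fills in the spreading-out and equidimensionality bookkeeping that the paper leaves implicit.
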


\paragraph{The $\mu_n$-action on $\Res_n$.}\label{sec:action}
The multiplicative group $\Gb_m$ acts on $\Mc_n$ by\footnote{Note that this action is isomorphic to the one considered by \cite{SS} under conjugation by the M\"obius transformation $z\mapsto z+1$.}
\begin{equation*}
    \lambda\cdot(\phi,\psi):=(\psi+\lambda(\phi-\psi),\psi).
\end{equation*}
Because the resultant is a homogeneous of degree $n$ with respect to this action, this induces an action of the group $\mu_n$ of $n^{th}$-roots of unity on $\Res_n$. We study this action on the \'etale cohomology of $\Res_n$.

\subsection{Proof of Theorem~\ref{theorem:monopole:main}}
We now prove Theorem~\ref{theorem:monopole:main}.  Recall that the resultant gives a morphism $\Res: \A^{2n}\to\A^1$. Note that
\[\Res_1=\{\frac{z+a_0}{z+b_0}:a_0-b_0=1\}\iso \{\frac{z+a_0}{z+(a_0-1)}\}\iso\A^1\]
as $\F_q$-varieties.  Thus the theorem is true when $n=1$. We now show the theorem for $n>1$.    Our analysis proceeds in a series of steps.

\paragraph{Step 1 (The $\mu_n$-isotopic decomposition of $H_{\et}^i(\Res_{n/\Fqbar};\Q_\ell)$):  }

For each $i\geq 0$ there is a decomposition of $H_{\et}^i(\Res_{n/\Fqbar};\Q_\ell)$ into $\mu_n$-isotypic components :
\begin{equation}
\label{eq:isotyp4}
H_{\et}^i(\Res_{n/\Fqbar};\Q_\ell)\iso H_{\et}^i(\Res_{n/\Fqbar};\Q_\ell)^{\mu_n}\bigoplus H_{\et}^i(\Res_{n/\Fqbar})^{\mu_n^\perp}
\end{equation}
where \[H_{\et}^i(\Res_{n/\Fqbar};\Q_\ell)^{\mu_n^\perp}\otimes\C:=\bigoplus_{m=1}^{n-1} H_{\et}^i(\Res_{n/\Fqbar})_m.\]
The decomposition \eqref{eq:isotyp4} is invariant under the action of $\Frob_q$. However,
for $m>0$ the subspace $H_{\et}^i(\Res_{n/\Fqbar})_m$ is in general {\em not} $\Frob_q$-invariant.  In fact, this failure of invariance is at the crux of the proof of the theorem.
We begin by finding the $\Frob_q$-invariant subspaces.

To this end, for any factor $a$ of $n$, define
\begin{equation*}
    \Oc_{a}:=\{ m ~|~ 1\le m\le n-1,~(m,n)=a\}
\end{equation*}
and define
\begin{equation*}
     H^i_{\et}(\Res_{n/\Fqbar})_a:=\bigoplus_{m\in\Oc_a} H^i_{\et}(\Res_{n/\Fqbar})_m.
\end{equation*}
Fix $n\geq 1$ and assume that $(q,n)=1$.  We claim that for each $i\geq 0$  the splitting
\begin{equation}\label{eq:newdecomp}
    H^i_{\et}(\Res_{n/\Fqbar};\Qb_\ell)^{\mu_n^\perp}\otimes\C\cong\bigoplus_{a\mid n} H^i_{\et}(\Res_{n/\Fqbar})_a
\end{equation}
is $\Frob_q$-equivariant.  To see this, note that since $(q,n)=1$, multiplication of $q$ acts by an automorphism of $\Z/n\Z$, and so preserves the order of elements in $\Z/n\Z$.  For $m\in \Oc_a$, the order of $m$ in $\Z/n\Z$ equals $n/a$.  Thus
the order of $qm$ in $\Z/n\Z$ is also $n/a$, and thus the greatest common divisor of $qm \ {\rm mod}\ n$  and $n$ equals $a$, proving the claim.

\paragraph{Step 2 (The $\mu_n$-invariant part of \boldmath$H_{\et}^i(\Res_{n/\Fqbar};\Q_\ell)$): }

First note that $\Res-1$ is an irreducible polynomial. Indeed, $\Res$ is an irreducible polynomial \cite[Section 77]{Va} and
\begin{lemma}
    Let $\Phi(x_1,\ldots,x_n,y_1,\ldots,y_m)$ be an irreducible, bi-homogeneous polynomial of bi-degree $(p,q)$. Then $\Phi-1$ is irreducible.
\end{lemma}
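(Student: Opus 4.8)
The plan is to prove the contrapositive: if $\Phi - 1$ factors nontrivially, then so does $\Phi$. Suppose $\Phi - 1 = FG$ with $F, G$ nonconstant. The key idea is to pass to the associated graded pieces with respect to the bigrading (or, more simply, to homogenize and look at top-degree terms). Write $F = \sum F_{(i,j)}$ and $G = \sum G_{(k,l)}$ as sums of bi-homogeneous components. Since $\Phi$ is bi-homogeneous of bi-degree $(p,q)$, comparing the bi-homogeneous parts of $FG = \Phi - 1$ forces cancellation in all bi-degrees except $(p,q)$ and $(0,0)$: the bi-degree $(p,q)$ part of $FG$ equals $\Phi$, and the bi-degree $(0,0)$ part equals $-1$.

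First I would isolate the extreme bi-homogeneous components. Let $F_{top}$ be the bi-homogeneous component of $F$ of maximal total degree appearing, and similarly pick out components realizing the extreme bidegrees; more carefully, introduce a single grading by total degree $\deg = (\text{degree in }x) + (\text{degree in }y)$ and let $F = F_d + (\text{lower})$, $G = G_e + (\text{lower})$ with $F_d, G_e$ the top-degree parts, $d = \deg F \ge 1$, $e = \deg G \ge 1$. Then the top-degree part of $FG$ is $F_d G_e$, which must equal the top-degree part of $\Phi - 1 = \Phi$, namely $\Phi$ itself (since $\Phi$ is bi-homogeneous, hence homogeneous of total degree $p + q$), so $d + e = p + q$ and $F_d G_e = \Phi$. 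Since $\Phi$ is irreducible and $d, e \ge 1$, this is already a contradiction — unless one of $F_d$, $G_e$ is a nonzero constant, i.e. $d = 0$ or $e = 0$, contradicting $d, e \ge 1$. Wait: one must be careful that $F_d$ and $G_e$ are the genuine top parts, so $d, e \ge 1$ as $F, G$ are nonconstant; thus $F_d G_e = \Phi$ exhibits $\Phi$ as a product of two nonconstant polynomials (each bi-homogeneous, being a single-degree part), contradicting irreducibility of $\Phi$.

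So the only real content is confirming that the total-degree-$(p+q)$ part of $\Phi - 1$ is $\Phi$ and the degree-$0$ part is $-1$, which is immediate from bi-homogeneity of $\Phi$ together with $\Phi$ having bi-degree $(p,q) \ne (0,0)$ (so that $\Phi$ has no constant term). I would also note the degenerate edge cases: if $p = q = 0$ then $\Phi$ is a nonzero constant and "irreducible" is vacuous/false, so we may assume $(p,q) \ne (0,0)$, which is implicitly the setting (the resultant has bi-degree $(n,n)$ with $n \ge 1$). I do not anticipate a genuine obstacle here; the one subtlety to state cleanly is that a top-degree homogeneous part of a product is the product of the top-degree homogeneous parts (valid over any integral domain, in particular over $\Z$ or a field), and that a single-total-degree component of a polynomial is itself genuinely homogeneous, so that $F_d G_e = \Phi$ is a factorization into homogeneous — indeed one may further extract bi-homogeneous — pieces that contradicts irreducibility. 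I would write this up in four or five lines.
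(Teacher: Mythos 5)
Your proof is correct and is essentially the same argument as the paper's: both decompose each factor of $\Phi-1$ into its top total-degree homogeneous part plus lower-order terms, observe that the product of the top parts must equal $\Phi$, and use irreducibility of $\Phi$ to force one of those top parts (hence that factor) to be constant. The only cosmetic difference is that you phrase it as a contradiction with both factors assumed nonconstant, while the paper argues directly that any factorization of $\Phi-1$ must be trivial; the underlying computation is identical.
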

\begin{proof}
    Suppose $\Phi-1=PQ$. Without loss of generality, we can assume that $P$ and $Q$ are of total degrees $c$ and $d$. Write $P=P_0+P_1$ where $P_0$ is homogenous of total degree $c$ and $\deg(P_1)<c$, and, similarly, write $Q=Q_0+Q_1$.  Then $c+d=p+q$, and
    \begin{align*}
        \Phi&=P_0Q_0,\text{ and}\\
        -1&=P_0Q_1+P_1Q_0+P_1Q_1
    \end{align*}
    The irreducibility of $\Phi$ implies that, without loss of generality, $P_0=1$ and thus $P_1=0$. Therefore, we have that $Q=\Phi-1$ with $Q_0=\Phi$ and $Q_1=-1$.
\end{proof}
It follows that  $H_{\et}^0(\Res_{n/\Fqbar};\Q_\ell)\iso\Q_\ell(0)$.  Since this group is generated by the constant function $1$, it follows that $H_{\et}^0(\Res_{n/\Fqbar};\Q_\ell)\subseteq H^i_{\et}(\Res_{n/\Fqbar};\Qb_\ell)^{\mu_n}$.  We now prove the reverse inclusion.

Recall that we have defined $\Mc_n:=\A^{2n}\setminus \Res^{-1}(0)$.  The variety ${\Mc}_n$ admits a free action of the multiplicative group $\Gb_m={\rm GL}_1$ via $\lambda\cdot \frac{\phi}{\psi}:=\frac{\lambda\phi}{\psi}$. This gives a Zariski-locally trivial fibering
\begin{equation}
\label{eq:Mnfibering}
\Gb_m\to {\Mc}_n\to {\Mc}_n/\Gb_m\iso \Res_n/\mu_n.
\end{equation}
where ${\Mc}_n/\Gb_m$ is a fiber bundle over $\Pb^{n-1}$.\footnote{Note that the projection $\frac{\phi}{\psi}\mapsto\phi$ induces a $\Gb_m$-equivariant fibering $\Mc_n\to(\Ab^n-\{0\})$, and $\Mc_n/\Gb_m\to\Pb^{n-1}$ is the quotient of this $\Gb_m$-action.}

Transfer now gives:
\[
H_{\et}^i(\Res_{n/\Fqbar};\Q_\ell)^{\mu_n}
\iso H_{\et}^i(\Res_n/\mu_{n/\Fqbar};\Q_\ell)
\iso H_{\et}^i({\Mc}_n/\Gb_{m/\Fqbar};\Q_\ell).
\]
It is therefore enough to prove that $H_{\et}^i({\Mc}_n/\Gb_{m/\Fqbar};\Q_\ell)$ vanishes except when $i=0$, in which case(as observed above) it is isomorphic to $\Q_\ell(0)$.  To this end, we apply the Serre spectral sequence in \'{e}tale cohomology \footnote{The Serre spectral sequence is a special case of the Leray spectral sequence with sheaf coefficients in \'{e}tale cohomology.  One reference for this spectral sequence is Theorem 12.7 of \cite{Mi}.} to the fibering \eqref{eq:Mnfibering}.  This spectral sequence has

\[
E_2^{i,j}=
\left\{
\begin{array}{ll}
H_{\et}^i({\Mc}_n/\Gb_{m/\Fqbar};H_{\et}^j(\Gb_{m/\Fqbar};\Q_\ell))
& \text{if $i,j\geq 0$}\\
0&\text{else}
\end{array}
\right.
\]
and the spectral sequence converges to $H_{\et}^i({\Mc}_{n/\Fqbar};\Q_{\ell})$.   This cohomology is computed as a special case of Theorem 1.2 of \cite{FW}, where in the notation of \cite{FW} the variety $\Mc_n$ was called $\Poly^{n,2}_1$.  It is as follows:
\begin{equation}\label{theorem:Mn:etalcoho}
    H^i_{\et}(\Mc_{{n/\Fqbar}};\Qb_\ell)\cong\left\{
        \begin{array}{lr}
            \Qb_\ell(0) & i=0\\
            \Qb_\ell(-1) & i=1\\
            0 & \text{else}
        \end{array}\right.
\end{equation}
We use this to compute $E_2^{i,j}$.  To start, we claim that the monodromy action on $H_{\et}^j(\Gb_{m/\Fqbar};\Q_\ell)$ is trivial. To see this, first recall
\[
H_{\et}^j(\Gb_{m/\Fqbar};\Q_{\ell})\iso \Q_\ell(-0)
\]
for $j=0,1$ and equals $0$ for $j>1$.  Over $\C$, $\pi_1(\Mc_n/\Gb_m)\iso\pi_1(\Res_n/\mu_n)\iso\mu_n$ since
$\pi_1(\Res_n)=0$.  Now use the fact that the $\mu_n$ action on $H_{\et}^j(\Gb_{m/\Fqbar};\Q_\ell)$ is the restriction of the action of $\Gb_m$ induced by left multiplication.  Over $\C$, since $\C^*$ is connected, this action is trivial.  Thus, after perhaps throwing away finitely many primes, naturality of base change (Theorem~\ref{theorem:monopole:main})  implies that the  $\mu_n$ action on   $H_{\et}^j(\Gb_{m/\Fqbar};\Q_\ell)$ is trivial.  We have thus shown:
\[
E_2^{i,j}=
\left\{
\begin{array}{ll}
H_{\et}^i({\Mc}_n/\Gb_{m/\Fqbar};\Q_\ell(-j))
& \text{if $j=0,1$}\\
0&\text{else}
\end{array}
\right.
\]
The differential $d_2^{i,j}: E_2^{i,j}\to E_2^{i+2,j-1}$ thus gives, for each $i\geq 0$, a homomorphism
\[
    d_2^{i,1}:H_{\et}^i({\Mc}_n/\Gb_{m/\Fqbar};\Q_\ell(-j)))\to H_{\et}^{i+2}({\Mc}_n/\Gb_{m/\Fqbar};\Q_\ell(1-j)).
\]
Since $E_2^{i,j}=0$ for $i>1$ and $j<0$, the only nontrivial differentials occur on the $E_2$ page, and, for each $i>0$:
\begin{equation}
    \label{eq:MnSS1}
    H_{\et}^i({\Mc}_{n/\Fqbar};\Q_{\ell})\iso \ker(d_2^{i-1,1})\oplus H_{\et}^i({\Mc}_n/\Gb_{m/\Fqbar};\Q_\ell(0))/\image(d_2^{i-1,1})
\end{equation}
while $H_{\et}^0({\Mc}_n/\Gb_{m/\Fqbar};\Q_\ell(0))\iso H_{\et}^0({\Mc}_{n/\Fqbar};\Q_\ell)\iso \Q_\ell(0)$.

Equation \eqref{theorem:Mn:etalcoho} now gives that
$H_{\et}^i({\Mc}_{n/\Fqbar};\Q_{\ell}) \iso \Q_{\ell}(-i)$ for $i=0,1$ and equals $0$ for $i>1$.  Now, the target of $d$ on $E_2^{i,j}$ is $0$ for $i=4n,4n-1$, so these entries vanish.    Working backwards, starting at $i=4n$ and working down to $i=1$, we can apply apply  Equation \eqref{eq:MnSS1} using that the left-hand side equals $0$, to conclude that $H_{\et}^i({\Mc}_n/\Gb_{m/\Fqbar};\Q_\ell)=0$ for $i\geq 1$.  This concludes the computation of
$H_{\et}^i(\Res_{n/\Fqbar};\Q_\ell)^{\mu_n}$.

\paragraph{Step 3 (The $H^i_{\et}(\Res_{n/\Fqbar})_a$): }In this step we analyze the individual summands $H^i_{\et}(\Res_{n/\Fqbar})_a$ of the decomposition in Equation \eqref{eq:newdecomp}.  We will prove:

\[H^i_{\et}(\Res_{n/\Fqbar})_a
\iso \left\{
\begin{array}{ll}
\left(\bigoplus_{m\in\Oc_a} \Q_\ell(0)\right)\otimes\Q_\ell(a-n)\otimes\C &j-2(n-a)=0\\
0&j\neq 0
\end{array}\right.
\]

Given this claim, the bijection
\begin{align*}
    \Oc_a&\to^\cong\{m'\le \frac{n}{a}~|~\gcd(m',\frac{n}{a})=1\}\\
    m&\mapsto\frac{m}{a}
\end{align*}
implies that $H^i_{\et}(\Res_{n/\Fqbar};\Q_\ell))$ has rank $\phi(\frac{n}{n-\frac{i}{2}})$.  To prove the claim, recall that for $a|n$ we defined
\begin{equation*}
    \Oc_{a}:=\{ m ~|~ 1\le m\le n-1,~(m,n)=a\}.
\end{equation*}
For any $m\in \Oc_a$ note that the order of $e^{2\pi i m/n}$ is $n/a$. For each $a | n$, define
\[
    Y_{n,a}:=\{\frac{\phi}{\psi}\in \Res_n~:~\psi(z)=\chi(z)^{n/a} \ \ \text{for some $\chi(z)\in k[z],~\deg(\chi)=a$}\}.
\]
Over any field $K$ containing a \emph{primitive} $n^{th}$ root of unity, Segal and Selby \cite[Proposition 2.1]{SS} construct an isomorphism\footnote{While Proposition 2.1 of \cite{SS} is stated only over the field $\C$, the proof works {\it verbatim} over any field $K$ containing a primitive $n^{th}$ root of unity.}
\begin{equation}\label{eq:SSiso}
    Y_{n,a/K}\cong \mu_n\times_{\mu_a} (\Res_{a/K}\times \Ab_K^{n-a}).
\end{equation}

In fact, as we now show, these varieties are isomorphic over $K=\F_q$ for any $q$.

\begin{proposition}\label{prop:SS}
    The isomorphism \eqref{eq:SSiso} is defined over $\F_q$, i.e.
    \begin{equation*}
        Y_{n,a}\cong \mu_n\times_{\mu_a} (\Res_a\times \Ab^{n-a})
    \end{equation*}
    as $\F_q$-varieties.
\end{proposition}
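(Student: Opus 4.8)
The plan is to show that the Segal--Selby isomorphism \eqref{eq:SSiso}, which is a priori defined over fields $K$ containing a primitive $n^{th}$ root of unity, actually descends to $\F_q$ for every $q$ with $(q,n)=1$. The key point is that both sides of \eqref{eq:SSiso} already make sense as $\F_q$-schemes --- the left side $Y_{n,a}$ is visibly cut out over $\Z$ (impose $\psi = \chi^{n/a}$ and $\Res(\phi,\chi^{n/a})=1$), and the right side $\mu_n\times_{\mu_a}(\Res_a\times\A^{n-a})$ is the quotient of an $\F_q$-scheme by a free action of the finite group-scheme $\mu_a$, hence is itself an $\F_q$-scheme. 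So what must be checked is that the map constructed in \cite{SS} is Galois-equivariant, i.e. commutes with $\Frob_q$.

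First I would recall the explicit form of the Segal--Selby map. Given $\tfrac{\phi}{\psi}\in Y_{n,a}$ with $\psi=\chi^{n/a}$ and $\Res(\phi,\chi)^{n/a}=\Res(\phi,\psi)=1$, one factors the root-of-unity $\Res(\phi,\chi)$: since it is an $n/a$-th root of $1$, pick $\zeta\in\mu_n$ with $\zeta^{n/a}=\Res(\phi,\chi)$ (well-defined up to $\mu_a$), and then $\tfrac{\zeta^{-?}\phi + (\cdots)}{\chi}$ (after the appropriate normalization, using the $\mu_n$-action $\lambda\cdot(\phi,\psi)=(\psi+\lambda(\phi-\psi),\psi)$) lies in $\Res_a$; the pair $(\zeta, \text{that element}, \text{the remaining }n-a\text{ coefficients})$ gives the point of $\mu_n\times_{\mu_a}(\Res_a\times\A^{n-a})$. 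The one genuinely nontrivial step is that this construction involves no choices beyond the $\mu_a$-ambiguity in $\zeta$, and every operation used --- forming resultants, extracting the unique monic factor $\chi$ of $\psi$ (which is canonical since $k[z]$ is a UFD and $\psi$ is a perfect $(n/a)$-th power), multiplying and dividing polynomials, and applying the $\mu_n$-action --- is defined by polynomials with $\Z$-coefficients. Hence the map is a morphism of $\Z[\tfrac1n]$-schemes on the level of the $\mu_n$-torsors, and passes to the quotient by $\mu_a$.

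Concretely, I would argue as follows. Over $\Fqbar$ (which contains a primitive $n^{th}$ root of unity since $(q,n)=1$), the isomorphism \eqref{eq:SSiso} holds by \cite{SS}. Both source and target carry their natural $\F_q$-structures and hence compatible geometric Frobenius actions; I claim the \cite{SS} isomorphism $f\colon Y_{n,a/\Fqbar}\to \mu_n\times_{\mu_a}(\Res_a\times\A^{n-a})_{\Fqbar}$ satisfies $f\circ\Frob_q = \Frob_q\circ f$. This is because $f$ is given by the same $\Z$-polynomial formulas regardless of the base field: writing out $f$ on an affine chart where $\zeta\in\mu_n$ is fixed, it is a polynomial map with integer coefficients, so it commutes with the $q$-power map on coordinates; the $\mu_a$-equivariance that is needed to make sense of $f$ globally is likewise defined over $\Z[\tfrac1n]$. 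By Galois descent (Weil's criterion for descent of morphisms along $\Fqbar/\F_q$), a $\Frob_q$-equivariant isomorphism of $\Fqbar$-schemes which are each defined over $\F_q$ descends to an isomorphism over $\F_q$.

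The main obstacle, and the only place real care is needed, is pinning down the \cite{SS} construction precisely enough to see that it has $\Z[\tfrac1n]$-coefficients and uses only canonical operations --- in particular, that the passage from $\psi$ to its $(n/a)$-th root $\chi$ is a well-defined morphism of schemes (not just a pointwise operation), and that the normalization chosen in \cite{SS} to land in $\Res_a$ rather than merely in $\Mc_a$ is itself polynomial. Once that bookkeeping is in place, equivariance under $\Frob_q$ is automatic because everything in sight is ``the same formula'' over every ring, and the proposition follows from faithfully flat (here, finite Galois) descent. I would remark that the hypothesis $(q,n)=1$ enters only to guarantee $\mu_n$ is étale and $\Fqbar$ contains a primitive $n^{th}$ root of unity; the scheme-theoretic statement of the proposition is what lets us later run the Serre spectral sequence and transfer arguments on $H^i_{\et}(\Res_{n/\Fqbar})_a$ uniformly in $q$.
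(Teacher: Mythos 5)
Your overall strategy---exhibit the Segal--Selby map over $\Fqbar$, prove it commutes with $\Frob_q$, and conclude by Galois descent---is the same as the paper's. The gap is in your justification of the $\Frob_q$-equivariance, which you call ``automatic because everything in sight is `the same formula' over every ring.'' That is false for the Segal--Selby map as actually written: it decomposes $Y_{n,a}$ into the pieces $Y_{n,a,\lambda}=\{\Res(\phi,\chi)=\lambda\}$ indexed by $\lambda\in\mu_{n/a}(\Fqbar)$, and on the piece $Y_{n,a,\lambda}=Y_{n,a,\zeta^{ja}}$ the normalization step multiplies by $\zeta^{-j}$ for a \emph{chosen} primitive $n^{\rm th}$ root of unity $\zeta$. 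This scaling has coefficients in $\Fqbar$, not $\Z[\tfrac1n]$; the pieces $Y_{n,a,\lambda}$ are not individually $\F_q$-subvarieties ($\Frob_q$ permutes them); and the componentwise map $\zeta^{-j}\cdot$ certainly does not commute with $\Frob_q$ on its own. The content of the proposition is precisely that these two failures cancel: $\Frob_q(\zeta^{-j}\cdot x)=\zeta^{-jq}\cdot\Frob_q(x)$ and $\Frob_q$ sends $Y_{n,a,\lambda}$ to $Y_{n,a,\lambda^q}$, so the \emph{global} map is $\Frob_q$-equivariant. The paper verifies exactly this square; your write-up asserts it is automatic, which is the step that needs proving.

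Your ``torsor'' reading, if carried out, would give a valid alternative packaging: pass to the $\mu_a$-cover $\widetilde Y_{n,a}=\{(\tfrac{\phi}{\chi^{n/a}},\zeta) : \zeta\in\mu_n,\ \zeta^{a}=\Res(\phi,\chi)\}$, on which $\zeta$ is a coordinate rather than a fixed choice, observe that the lift $\widetilde Y_{n,a}\to\mu_n\times(\Res_a\times\A^{n-a})$, $(\tfrac{\phi}{\chi^{n/a}},\zeta)\mapsto(\zeta,\ \zeta^{-1}\cdot\tfrac{\phi_1}{\chi},\ \phi_0)$, genuinely is given by $\Z[\tfrac1n]$-polynomials and is $\mu_a$-equivariant over $\Z[\tfrac1n]$, and then quotient. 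That would replace the paper's explicit $\Frob_q$-computation by a descent-along-a-torsor argument. But you do not do this; you explicitly defer it as ``the main obstacle\ldots the only place real care is needed'' and then treat the equivariance as following formally. That deferred bookkeeping \emph{is} the proof of the proposition, so as written the argument is incomplete.
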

\begin{proof}
    The homogeneity of the resultant implies that $\Res(\phi,\chi^{n/a})=\Res(\phi,\chi)^{n/a}$. Thus, for any $\frac{\phi}{\psi}\in Y_{n,a}$, $\Res(\phi,\psi)$ is an $n/a^{th}$ root of unity. Over $\Fqbar$, this gives a decomposition
    \begin{equation*}
        Y_{n,a}\cong\coprod_{\lambda\in\mu_{n/a}} Y_{n,a,\lambda}
    \end{equation*}
    where $Y_{n,a,\lambda}=\Res^{-1}(\lambda)\cap Y_{n,a}$. Following Segal and Selby, given $\frac{\phi}{\chi^{n/a}}\in Y_{n,a}$ we can write
    \begin{equation*}
        \phi=\phi_0\chi+\phi_1
    \end{equation*}
    where $\deg(\phi_0)<n-a$ and $\deg(\phi_1)<a$. Using this, the assignment
    \begin{equation*}
        \frac{\phi}{\chi^{n/a}}\mapsto (\frac{\phi_1}{\chi},\phi_0)
    \end{equation*}
    defines an isomorphism over $\F_q$
    \begin{equation*}
        Y_{n,a,\lambda}\cong \Res^{-1}_a(\lambda)\times\Ab^{n-a}.
    \end{equation*}
    Now a primitive $n^{th}$ root of unity $\zeta$ gives an isomorphism of $\Fqbar$-varieties
    \begin{align*}
        \Res_a^{-1}(\zeta^{ja})&\to^{\zeta^{-j}}_\cong \Res_a\\
        \frac{\phi}{\psi}&\mapsto \frac{\zeta^{-j}\phi}{\psi}.
    \end{align*}
    Taken together, these isomorphisms define a $\mu_n$-equivariant map
    \begin{equation}\label{eq:SSmap}
        Y_{n,a}\to^\cong \coprod_{\lambda\in\mu_{n/a}} Y_{n,a,\lambda}\to^\cong \coprod_{\lambda\in\mu_{n/a}} \Res^{-1}_a(\lambda)\times\Ab^{n-a}\to^{\sqcup_j \zeta^{-j}} \Res_a\times\Ab^{n-a}
    \end{equation}
    where the $\mu_n$ action on $\Res^{-1}_a(\lambda)$ factors through the action of $\mu_a$. It suffices to show that the map \eqref{eq:SSmap} is $\Frob_q$-equivariant. But, by the definition of Frobenius, we have
    \begin{equation*}
        \Frob_q(\frac{\zeta^{-j}\phi}{\psi})=\zeta^{-jq}\Frob_q(\frac{\phi}{\psi})
    \end{equation*}
    which shows that, for any $\lambda\in\mu_{n/a}$ the square
    \begin{equation*}
        \begin{xy}
            \square[Y_{n,a,\lambda}`Y_{n,a,\lambda^q}`Y_{n,a,1}`Y_{n,a,1};\Frob_q`\lambda^{-1/a}`\lambda^{-q/a}`\Frob_q]
        \end{xy},
    \end{equation*}
    and thus the map \eqref{eq:SSmap} is $\Frob_q$-equivariant, and thus defined over $\F_q$. Combining this with the resultant
    \begin{equation*}
        \Res\colon Y_{n,a}\to\mu_{n/a}
    \end{equation*}
    we obtain a map of $\F_q$-varieties
    \begin{equation*}
        Y_{n,a}\to^{\pi} \mu_{n/a}\times (\Res_a\times\Ab^{n-a}).
    \end{equation*}
    By inspection, this is an isomorphism over $\Fqbar$, and because $\pi$ is $\Frob_q$-equivariant, $\pi^{-1}$ is as well. Thus we conclude that $\pi$ is an isomorphism over $\F_q$.

    By inspection, we see that $\pi$ induces an isomorphism of $\mu_n$-varieties over $\F_q$
    \begin{equation*}
        Y_{n,a}\cong \mu_n\times_{\mu_a} (\Res_a\times\Ab^{n-a}).
    \end{equation*}
\end{proof}
\begin{remark}
    We remark that we are using here, in a crucial way, the condition that $\Res(\phi,\psi)=1$ (as opposed, say, to $\Res(\phi,\psi)=2$), since $\Res(\phi,\psi)=\Res(\phi,\chi)^a$, and so $\Res(\phi,\chi)$ is an $a^{\rm th}$ root of unity.
\end{remark}

Proposition \ref{prop:SS} implies that, as $\mu_n$-representations,
\begin{equation}
    \label{eq:Ycoho}
    H_{\et}^i(Y_{n,a/\Fqbar};\Q_{\ell})\iso \Ind_{\mu_a}^{\mu_n}H_{\et}^i(\Res_{a/\Fqbar};\Q_{\ell})
\end{equation}
for each $i\geq 0$.

\begin{lemma}
\label{lemma:Uvanish}
    For any $a\mid n$ and for all but finitely many primes $p$, we have for every positive
    power $q$ of $p$:
    \begin{equation*}
        H^\ast_{\et}((\Res_n-Y_{n,a})_{/\Fqbar})_a:=\bigoplus_{m\in\Oc_a}H_\et^\ast((\Res_n-Y_{n,a})_{/\Fqbar})_m=0.
    \end{equation*}
\end{lemma}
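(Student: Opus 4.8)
The plan is to exploit the forgetful map $\pi\colon\Res_n\to\Ab^n$, $(\phi,\psi)\mapsto\psi$. Since $Y_{n,a}=\{(\phi,\psi)\in\Res_n:\psi=\chi^{n/a}\text{ for some }\chi\text{ with }\deg\chi=a\}$ imposes a condition on $\psi$ alone, it is the preimage under $\pi$ of the closed subvariety $P\subset\Ab^n$ of perfect $(n/a)$-th powers, so $\Res_n-Y_{n,a}=\pi^{-1}(B)$ with $B:=\Ab^n-P$. The heart of the argument is the structure of the fibers of $\pi$ together with the $\mu_n$-action on them. For $\psi$ with distinct roots $\beta_1,\dots,\beta_r$ of multiplicities $m_1,\dots,m_r$, identify a monic degree-$n$ polynomial $\phi$ with its class in $k[z]/(\psi)\cong\prod_i k[z]/(z-\beta_i)^{m_i}$ and set $c_i:=\phi(\beta_i)$; then $\Res(\phi,\psi)=\prod_i c_i^{m_i}$, and since this constraint involves only the $r$ coordinates $c_i$,
\[
\pi^{-1}(\psi)\cong T_\psi\times\Ab^{n-r},\qquad T_\psi:=\Bigl\{(c_i)\in\Gb_m^r~:~\textstyle\prod_i c_i^{m_i}=1\Bigr\}.
\]
The identity $\lambda\cdot(\phi,\psi)=(\lambda\phi+(1-\lambda)\psi,\psi)$ shows $\mu_n$ acts on $k[z]/(\psi)$ by the scalar $\lambda$, i.e. it scales every fiber coordinate; in particular it acts trivially on $\Ab^{n-r}$ and on $T_\psi$ it acts by translation by the point $(\lambda,\dots,\lambda)$ for the subgroup-scheme structure $T_\psi\subset\Gb_m^r$.

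Next I would determine which $\mu_n$-characters occur in $H^\ast_\et(\pi^{-1}(\psi)_{/\Fqbar};\Q_\ell)$ and in $H^\ast_c$. For $p\nmid n$, Smith normal form gives $T_\psi\cong\Gb_m^{r-1}\times\mu_{g_\psi}$ with $g_\psi:=\gcd(m_1,\dots,m_r)$, so $\pi_0(T_\psi)\cong\Z/g_\psi$, and the diagonal $\mu_n$ permutes components transitively through the natural surjection $\mu_n\onto\mu_{g_\psi}$; the stabilizer of a component is $\mu_{n/g_\psi}$, and it acts on that component ($\cong\Gb_m^{r-1}\times\Ab^{n-r}$) by translation by a point of the connected group (which lands in this component) and by scaling on the affine factor, hence trivially on cohomology. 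Therefore, in each degree, $H^\ast$ (and $H^\ast_c$) of $\pi^{-1}(\psi)$ is, as a $\mu_n$-representation, a sum of Tate twists of copies of the permutation module $\Q_\ell[\mu_n/\mu_{n/g_\psi}]$, so the only characters $\chi_m$ of $\mu_n$ that appear satisfy $(n/g_\psi)\mid m$. If some $m\in\Oc_a$ appeared, then from $(n/g_\psi)\mid m$ and $(n/g_\psi)\mid n$ we would get $(n/g_\psi)\mid\gcd(m,n)=a$, i.e. $(n/a)\mid g_\psi$, i.e. $\psi\in P$ — contradicting $\psi\in B$. Thus for every $\psi\in B$ the $\Oc_a$-isotypic part of the cohomology of $\pi^{-1}(\psi)$ vanishes.

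To globalize, stratify $B$ by the multiplicity type $\tau$ of $\psi$; these strata are $\mu_n$-invariant, as $\mu_n$ fixes $\psi$. Over each stratum $S_\tau$, $\pi$ is an étale-locally trivial fiber bundle, so in the Leray spectral sequence $R^j\pi_\ast\Q_\ell$ (resp.\ $R^j\pi_!\Q_\ell$) is a $\mu_n$-equivariant local system whose stalks are the fiber cohomologies just analyzed; since $\mu_n$ acts trivially on $S_\tau$ and the $\Oc_a$-isotypic part of each stalk vanishes, we get $H^\ast_c(\pi^{-1}(S_\tau)_{/\Fqbar})_m=0$ for all $m\in\Oc_a$. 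Running the long exact sequences in compactly supported cohomology attached to the stratification of $\Res_n-Y_{n,a}$ by the $\pi^{-1}(S_\tau)$, and inducting on the number of strata, yields $H^\ast_c((\Res_n-Y_{n,a})_{/\Fqbar})_m=0$ for all $m\in\Oc_a$; since $\Res_n-Y_{n,a}$ is smooth (because $(q,n)=1$) and $\Oc_a$ is invariant under $m\mapsto-m$, Poincaré duality converts this into the claimed vanishing of $H^\ast_\et((\Res_n-Y_{n,a})_{/\Fqbar})_a$. The only excluded primes are those dividing $n$, needed so that $\mu_g$ is étale for $g\mid n$. The main obstacle is the bookkeeping in this last step — propagating the $\mu_n$-isotypic vanishing through the stratification while juggling $H^\ast$ versus $H^\ast_c$ — whereas the fiber computation and the numerical observation that $(n/a)\nmid g_\psi$ on $B$ are the conceptual core and are short.
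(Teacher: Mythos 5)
Your proof is correct, and it takes a genuinely different route from the paper. The paper's proof of Lemma~\ref{lemma:Uvanish} is a one-liner: it quotes Segal--Selby's Proposition~2.2 (a topological statement about $H^\ast(\cdot;\Q)$ over $\C$) and then invokes the comparison-and-base-change Theorem~\ref{theorem:basechange} to transport the vanishing to $\'e$tale cohomology over $\Fqbar$. Your argument instead works directly in the $\'e$tale setting: you fiber $\Res_n-Y_{n,a}$ over the $\psi$-space, compute the fiber $T_\psi\times\Ab^{n-r}$ with $T_\psi=\{\prod c_i^{m_i}=\pm1\}$, identify the $\mu_n$-action as translation, use Smith normal form to see $\pi_0(T_\psi)\cong\mu_{g_\psi}$ with $\mu_n$ acting through the surjection $\lambda\mapsto\lambda^{n/g_\psi}$, and then observe that a character $\chi_m$ with $m\in\Oc_a$ appearing would force $(n/a)\mid g_\psi$, i.e.\ $\psi$ a perfect $(n/a)$-th power, contradicting $\psi\notin P$. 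The globalization via the multiplicity-type stratification and a final Poincar\'e duality (using that $\Oc_a$ is stable under $m\mapsto -m$) is also sound. The one small imprecision is the phrase ``acts trivially on $\Ab^{n-r}$'': the $\mu_n$-action there is by scaling, which is nontrivial on the variety but trivial on cohomology, which is what you actually need.

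This is worth flagging as an improvement, not just an alternative: the paper's Remark on ``throwing away primes'' explicitly states that Lemma~\ref{lemma:Uvanish} is the unique place forcing them to exclude finitely many extra primes beyond those dividing $n$, precisely because the only proof they know goes through Segal--Selby over $\C$ and is ``inherently non-algebraic.'' Your argument avoids the comparison theorem entirely, so (after checking that the stratification and local triviality are defined over $\Z[1/n]$, which they are) it yields the lemma, and hence Theorem~\ref{theorem:monopole:main}, for \emph{all} primes $p\nmid n$ rather than for all but finitely many.
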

\begin{proof}
    The analogous theorem over $\C$ is Proposition 2.2 in \cite{SS}. The lemma now follows from the comparison theorem and base change, i.e. Theorem \ref{theorem:basechange}.
\end{proof}

\begin{remark}[Throwing away primes]
    Lemma~\ref{lemma:Uvanish} is the only instance in the proof of Theorem~\ref{theorem:monopole:main} where we need to exclude finitely many primes not dividing $n$.\footnote{We excluded finitely many primes above in the monodromy computation in Step 2, but this was for convenience, not necessity.} We need to do this because the only proof we currently know of Lemma~\ref{lemma:Uvanish} is that of Segal-Selby, and this proof is inherently non-algebraic.  Because of this we must quote base change (Theorem \ref{theorem:basechange}) to convert a statement about singular cohomology of complex points to \'etale cohomology.  Since the varieties in question are not projective, finitely many primes must be excluded. If a direct proof of Lemma~\ref{lemma:Uvanish}, completely within the theory of \'etale cohomology, could be found, then the rest of our proof of Theorem~\ref{theorem:monopole:main} would give the statement for all primes not dividing $n$.
\end{remark}
We apply the long exact sequence of a pair in \'{e}tale cohomology; see Corollary 16.2 of \cite{Mi}, and then we take the direct sum of the $m$-isotypic components for $m\in\Oc_a$.  In Milne's notation, setting $c=n-a; Z=Y_{n,a};X=\Res_n;U=\Res_n-Y_{n,a}$, for any $0\le j\le 2(n-a)-2$, we obtain that
\begin{align*}
    H^j_{\et}(\Res_{n/\Fqbar})_a\cong H^j_{\et}((\Res_n-Y_{n,a})_{/\Fqbar})_a
\end{align*}
and, above degree $2(n-a)-2$, we have a long exact sequence
\begin{align*}
    0&\to H_{\et}^{2(n-a)-1}(\Res_{n/\Fqbar})_a\to
    H_{\et}^{2(n-a)-1}((\Res_n-Y_{n,a})_{/\Fqbar})_a\to \\
    &H_{\et}^0(Y_{n,a/\Fqbar})_a\otimes\Q_{\ell}(a-n)\to H^{2(n-a)}_{\et}(\Res_{n/\Fqbar})_a\to H^{2(n-a)}_{\et}((\Res_n-Y_{n,a})_{/\Fqbar})_a\cdots \\
    \cdots&H_{\et}^{j-2(n-a)}(Y_{n,a/\Fqbar})_a\otimes\Q_{\ell}(a-n)
    \to H_{\et}^j(\Res_{n/\Fqbar})_a\to H_{\et}^j((\Res_n-Y_{n,a})_{/\Fqbar})_a\cdots
\end{align*}
Lemma~\ref{lemma:Uvanish} gives that, for all but finitely many primes, the terms $H^i_{\et}((\Res_n-Y_{n,a})_{/\Fqbar})_a$ vanish.  It follows that $H_{\et}^j(\Res_{n/\Fqbar})_a=0$ for $j\le 2(n-a)-1$, and that, for $j\ge 2(n-a)$,
\begin{equation}
\label{eq:xnvy}
H_\et^{j}(\Res_{n/\Fqbar})_a\iso H_\et^{j-2(n-a)}(Y_{n,a/\Fqbar})_a\otimes\Q_{\ell}(a-n)
\end{equation}
We therefore have
\[
\begin{array}{lll}
H_{\et}^j(\Res_{n/\Fqbar})_a&\iso H_\et^{j-2(n-a)}(Y_{n,a/\Fqbar})_a\otimes\Q_{\ell}(a-n)&\text{by \eqref{eq:xnvy}}\\
&&\\
&\iso (\Ind_{\mu_a}^{\mu_n}H_\et^{j-2(n-a)}(\Res_{a/\Fqbar})_a\otimes\Q_{\ell}(a-n)&\text{by \eqref{eq:Ycoho}}\\
&&\\
&\iso \Hom_{\mu_n}(\bigoplus_{m\in \Oc_a}V_m, \Ind_{\mu_a}^{\mu_n}H_\et^{j-2(n-a)}(\Res_{a/\Fqbar})\otimes\Q_{\ell}(a-n)) &\\
& \hspace{2in} \text{where $V_m$ is the $m$-isotypic irrep}& \\
&&\\
&\iso \bigoplus_{m\in\Oc_a}\Hom_{\mu_a}({\rm Res}_{\mu_a}^{\mu_n}V_m, H_\et^{j-2(n-a)}(\Res_{a/\Fqbar})\otimes\Q_{\ell}(a-n))\\
& \hspace{2in} \text{by Frobenius Reciprocity}&\\
&&\\
&\iso \bigoplus_{m\in\Oc_a} \Hom_{\mu_a}(V_0, H_\et^{j-2(n-a)}(\Res_{a/\Fqbar})\otimes\Q_{\ell}(a-n))\\
& \hspace{2in} \text{since $a$ divides $m$}&\\
&&\\
&\iso \bigoplus_{m\in\Oc_a}H_\et^{j-2(n-a)}(\Res_{a/\Fqbar})^{\mu_a}\otimes\Q_\ell(a-n)&\\
&&\\
\end{array}\]
which is, as we have shown above,
\[
\iso \left\{
\begin{array}{ll}
\left(\bigoplus_{m\in\Oc_a} \C\right)\otimes\Q_\ell(a-n)&j-2(n-a)=0\\
0&j\neq 0
\end{array}\right.
\]
as claimed above.

\paragraph{Step 4 (The permutation action of \boldmath$\Frob_q$): }  We complete our analysis
of $H^i_{\et}(\Res_{n/\Fqbar})_a$ as a $\Frob_q$-module.  The analysis of Step 3  shows that $\Frob_q$ acts by $q^{n-a}$ times the action of $\Frob_q$ on the factor $\bigoplus_{m\in\Oc_a} \C$ of  $H^{2(n-a)}_{\et}(\Res_n)_a$.  We claim that this is given by a permutation action
\begin{equation}
\label{eq:permaction}
    \Frob_q\colon \C_m\to^1 \C_{qm\text{ mod }n}.
\end{equation}

Granting this, we conclude that the variety $\Res_n$ is pure.  Further, we see that
$\Res_n$ is of Tate type if and only if if $q \equiv 1 \text{ mod } n$.

To prove the claim, recall that the actions of $\mu_n$ and of $\Frob_q$ on $H_{\et}^i(\Res_{n/\Fqbar})$ do not commute.  This is because $\Frob_q$ acts on $\mu_n(\Fqbar)$ as an automorphism.  For any vector $v\in H_{\et}^i(\Res_{n/\Fqbar})$ and any $\sigma\in \mu_n$:
\begin{equation}
\label{eq:twistedequiv}
\Frob_q(\sigma\cdot v)=\Frob_q(\sigma)\cdot \Frob_q(v).
\end{equation}
To be more explicit, let $\lambda$ be any primitive $n^{th}$ root of $1$ in $\Fqbar$.  Then we can write each element of $\mu_n$ as $\lambda^j$ for some $0\leq j\leq n-1$.  The action of $\Frob_q$ on $\mu_n$ is given by $\Frob_q(\lambda^j)=\lambda^{jq}$ where $jq$ is taken mod $n$, and so
\begin{equation}
\label{eq:twistedequiv2}
\Frob_q( \lambda^j\cdot v)= \lambda^{qj}\cdot \Frob_q(v)
\end{equation}
for any $0\leq j\leq n-1$.   This, combined with Step 3 and Proposition \ref{prop:SS} above prove the claim.

\paragraph{Step 5 (Computing the trace of \boldmath$\Frob_q$): } To conclude the proof we must compute the trace of $\Frob_q$, which by the analysis above equals $q^{n-a}$ times the number of $\Frob_q$-fixed vectors.  The permutation action given in Equation \eqref{eq:permaction} has a fixed vector precisely when $(q-1)m\equiv 0 \ {\rm mod}\ n$, i.e. when $(q-1)m/a\equiv 0 \ {\rm mod}\ n/a$.  If $q\equiv 1 \ {\rm mod}\ n/a$ then this equation has no solutions since by assumption $(m,n)=a$.  It follows in this case that $\Frob_q$ acts with trace $0$.  If $q\equiv 1 \ {\rm mod}\ n/a$ then this equation has $\phi(n/a)$ solutions, where $\phi$ is Euler's totient function.  In particular, in this case $\Frob_q$ acts on $H^{2(n-a)}_{\et}(\Res_n;\Q_\ell)$ by multiplication by $q^{n-a}$.
\endproof

\section{Moduli space of magnetic monopoles}
\label{section:monopoles}

The varieties $\Res_n$ are closely related to some moduli spaces studied in physics, and the results above can be used to deduce arithmetic properties of these spaces, as we now explain.

The moduli space $\Mc_n$ of $\SU(2)$ monopoles of charge $n$ in $\Rb^3$, and the associated moduli space $X_n$ of strongly centered monopoles, have a rich geometric and topological structure.  These complex algebraic varieties have been studied both by physicists and mathematicians; see \cite{SS} and the references therein.

Let $\Mc_n$ be the {\em moduli space of based $\SU(2)$ monopoles in $\Rb^3$  of charge $n$}. Elements of $\Mc_n$ are pairs $(A,\Phi)$, where $A$ is a smooth connection on the trivial $\SU(2)$ bundle $E\to \Rb^3$, and $\Phi$ is a smooth section of the vector bundle associated to $E$ via the adjoint representation. The pair $(A,\Phi)$ is a monopole if it satisfies two conditions.  First, it must give a solution to the {\em Bogomolnyi equation}
\[\ast F_A=D_A\Phi\]
where $\ast$ is the Hodge star operator, $D_A$ is the covariant derivative operator defined by $A$, and $F_A$ is the curvature of $A$.  Second, $(A,\Phi)$ must satisfy a regularity and boundary condition.  See, e.g.\ Chapter 1 of \cite{AH} for details.   These spaces connect to the present paper because of a different description of $\Mc_n$, due to Donaldson.

As explained for example by Manton and Murray, there are many ways of describing $\Mc_n$, each of which leads to the moduli space of degree $n$ rational maps $\Pb^1\to\Pb^1$.   This is summarized in \cite{MM} by the following diagram:

\[
\begin{array}{ccc}
\text{Monopoles} & \longleftrightarrow & \text{Holomorphic bundles} \\
\text{\ \ \ \ \ }\searrow & & \swarrow\\
&&\\
\updownarrow & \text{Rational Maps}& \updownarrow\\
&&\\
\text{\ \ \ \ \ }\nearrow& &  \nwarrow\\
\text{Nahm data}& \longleftrightarrow & \text{Spectral curves}
\end{array}
\]

The moduli space $\Mc_n$ is a $2n$-dimensional complex manifold.  Donaldson \cite{Do} proved that there is a diffeomorphism of $\Mc_n$ with the moduli space of degree $n$ rational maps $\Pb^1\to\Pb^1$ that send $\infty$ to $0$:

\begin{equation}
\label{eq:donaldson1}
\Mc_n\iso \big\{\frac{\phi}{\psi}
=\frac{a_{n-1}z^{n-1}+\cdots +a_1z+a_0}
{z^n+b_{n-1}z^{n-1}+
\cdots +b_1z+b_0}: \ \text{$\phi,\psi\in\C[z]$ have no common root}\big\}
\end{equation}

Thus the diffeomorphism \eqref{eq:donaldson1} endows $\Mc_n$ with the structure of a smooth, complex-algebraic variety of (complex) dimension $2n$.  There is a subvariety $X_n$ of $\Mc_n$, called the {\em reduced moduli space} of $\SU(2)$ monopoles of charge $n$ in $\Rb^3$, or
the {\em moduli space of strongly centered monopoles}, given by
\[X_n\iso \big\{ \frac{\phi}{\psi}\in\Mc_n: b_{n-1}=0\ \text{and}\ \Res(\phi,\psi)=1\big\}\]

The algebraic variety $X_n$ is a smooth hypersurface in $\C^{2n-1}$.  It admits an action by algebraic automorphisms, of the cyclic group $\mu_n$ of $n^{\rm th}$ roots unity. Segal--Selby \cite{SS} computed the isotypic components under this action of the rational singular cohomology groups $H^*(X_n;\Q)$.

When $n$ is invertible in $\F_q$ there is a Zariski-locally trivial fibration
$\Gb_a\to \Res_1\to X_n$.  This together with Theorem \ref{cor:pointcount} implies the following.

\begin{corollary}
With notation as above, when $\gcd(q,n)=1$,
\begin{align*}
    H^\ast_{\et}(X_{n/\Fqbar};\Q_\ell)&\cong H^\ast_{\et}(\Res_{n/\Fqbar};\Q_\ell)\intertext{as given in Theorem \ref{theorem:monopole:main} and}
    |X_n(\F_q)| &=|\Res_n(\F_q)|/q
\end{align*}
as given in Corollary \ref{cor:pointcount}.
\end{corollary}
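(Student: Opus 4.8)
The plan is to leverage the Zariski-locally trivial fibration $\Gb_a\to\Res_n\to X_n$ recalled just above, whose construction I would first make explicit. The additive group $\Gb_a$ acts on $\Res_n$ by simultaneous translation of the variable, $t\cdot(\phi,\psi):=(\phi(z+t),\psi(z+t))$, which preserves both the property of being a pair of monic degree $n$ polynomials and the condition $\Res(\phi,\psi)=1$, since the resultant is translation-invariant. When $n$ is invertible in $\F_q$ this action sends the coefficient $b_{n-1}$ of $\psi$ to $b_{n-1}+nt$, hence is free; the slice $\{b_{n-1}=0\}$ is a section, which one identifies with $X_n$ via $(\phi,\psi)\mapsto(\phi-\psi,\psi)$ as in \cite{FW}, and $t=-b_{n-1}/n$ in fact furnishes a global trivialization $\Res_n\cong\Ab^1\times X_n$ of $\F_q$-varieties.

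Given this, the cohomology isomorphism follows from the Serre spectral sequence in \'etale cohomology applied to the $\Ab^1$-bundle $\Res_n\to X_n$, exactly as in Step~2 of the proof of Theorem~\ref{theorem:monopole:main}. Its $E_2$ page is $E_2^{i,j}=H^i_{\et}(X_{n/\Fqbar};H^j_{\et}(\Ab^1_{\Fqbar};\Q_\ell))$; since $H^j_{\et}(\Ab^1_{\Fqbar};\Q_\ell)$ is $\Q_\ell(0)$ for $j=0$ and vanishes otherwise --- \emph{with no Tate twist}, which is the key difference between $\Ab^1$ and $\Gb_m$ --- the sequence is concentrated on the bottom row and degenerates at $E_2$. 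This gives a $\Frob_q$-equivariant isomorphism $H^i_{\et}(\Res_{n/\Fqbar};\Q_\ell)\cong H^i_{\et}(X_{n/\Fqbar};\Q_\ell)$ for all $i$, with no shift and no twist; feeding in Theorem~\ref{theorem:monopole:main} then yields the first claim, and in particular shows $X_n$ is pure and of Tate type precisely when $q\equiv 1$ mod $n$.

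For the $\F_q$-point count I would argue directly, rather than through the isomorphism just proved, which (being untwisted) would give the incorrect $|\Res_n(\F_q)|=|X_n(\F_q)|$. Using Zariski-local triviality one stratifies $X_n$ over $\F_q$ into finitely many locally closed subvarieties over which the bundle is trivial; the preimage of each stratum is its product with $\Ab^1$ and so has $q$ times as many $\F_q$-points, and summing gives $|\Res_n(\F_q)|=q\,|X_n(\F_q)|$. Equivalently, one runs the Leray spectral sequence for \emph{compactly supported} cohomology, where $H^\ast_c(\Ab^1_{\Fqbar};\Q_\ell)=\Q_\ell(-1)$ in degree $2$: the resulting shift by $2$ and twist by $(-1)$ is precisely the factor $q$ appearing in the Grothendieck--Lefschetz count, and is consistent with the untwisted ordinary-cohomology statement because $\Res_n$ is not proper. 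Dividing the formula of Corollary~\ref{cor:pointcount} by $q$ then gives $|X_n(\F_q)|$. I do not anticipate any real obstacle: building and trivializing the bundle is routine and the spectral sequence collapses at once; the one place to take care is to keep ordinary and compactly supported \'etale cohomology distinct, the former producing the untwisted isomorphism and the latter the factor of $q$ in the point count.
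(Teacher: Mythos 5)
Your proposal is correct and follows the same route the paper indicates (the fibration $\Gb_a\to\Res_n\to X_n$, which you correctly read despite the paper's typo ``$\Res_1$''): translation by $t$ shifts $b_{n-1}$ by $nt$, so when $n$ is invertible the slice $\{b_{n-1}=0\}$ gives a global trivialization $\Res_n\cong\Ab^1\times X_n$, from which the cohomology isomorphism and the factor of $q$ in the point count both follow. Your care to distinguish ordinary from compactly supported cohomology is exactly the right point to flag; note only that once you have the \emph{global} trivialization, the stratification argument and the spectral sequence are both superfluous --- K\"unneth for $H^\ast$ and $H^\ast_c$ (or simply $|\Ab^1(\F_q)|\cdot|X_n(\F_q)|$) finishes it immediately.
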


\appendix

 \appendix

\section{An elementary count of the cardinality of $\R_n(\F_q)$}

We present a more direct approach to count the cardinality of the set
$\R_n(\F_q)$. It is elementary (no use of \'etale cohomology is needed) and we
obtain a formula valid for all values of $q$ and $n$.

Our main tool is  the ``addition law'' $\oplus$ of pointed rational functions that was introduced in \cite[Proposition~3.1]{Caz}. For sake of completeness, we first recall it briefly.

Let $\Fr_n$ denote the scheme of degree $n$ rational functions which send $\infty$ to $\infty$.\footnote{Note that $\Fr_n$ is isomorphic to $\Mc_n$ under the M\"obius transformation $\frac{A(z)}{B(z)}\mapsto \frac{B(z)-A(z)}{B(z)}$.} Given two degrees $n_1,n_2\geq 0$, we define a map
 $$ \oplus : \ \Fr_{n_1} \times \Fr_{n_2} \to \Fr_{n_1+n_2}$$
as follows.  Two rational functions
$\frac{A_i}{B_i}\in \Fr_{n_i}$, for $i= 1,2$,
\emph{uniquely} define two pairs $(U_i,V_i)$ of polynomials with
  $\deg U_i \leq n_i-2$ and $\deg V_i \leq n_i-1$
and satisfying Bézout identities
$A_i U_i+B_i V_i=1$ (this is true over any ring because $A_i$ is monic).
 Define polynomials $A_3, B_3, U_3$ and $V_3$ as:
  $$
\begin{bmatrix}A_3 & -V_3\\ B_3 & U_3\end{bmatrix} :=
\begin{bmatrix}A_1 & -V_1\\ B_1 & U_1\end{bmatrix} \cdot
\begin{bmatrix}A_2 & -V_2\\ B_2 &U_2\end{bmatrix} .
$$
One easily checks that  $\frac{A_3}{B_3}$ is in  $\Fr_{n_1+n_2}$.
Over a field, this addition law is closely related to continued fraction
expansion of rational functions. This gives:

\begin{lemma}
\label{lemmefrac}
 Let $k$ be any field and let $\frac A B$ be a element of $\Fr_n(k)$. Then
there exists a unique family of monic polynomials $P_1, \dotsc, P_r$ and
a unique family of scalars $a_1,\dotsc ,a_r \in k^\times$ such that
$$ \frac A B= \frac {P_1} {a_1}\oplus \dots \oplus \frac {P_r} {a_r}. $$
Moreover:
\begin{enumerate}
 \item Let $n_i$ denote the degrees of the $P_i$. Then $n=n_1+ \dotsc +n_r$.
 \item There exists a sign $\varepsilon(\un) =\pm 1$ (depending only on the $n_i$'s)
such that
$$ \R(A, B)= \epsilon(\un) a_1^{n_1}\cdots a_r^{n_r}.$$
\end{enumerate}
\end{lemma}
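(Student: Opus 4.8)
The plan is to exploit the Euclidean/continued-fraction structure that the addition law $\oplus$ encodes, running an induction on the degree $n$. The base case $n=0$ is the constant function (empty family), and $n=1$ is immediate: an element of $\Fr_1(k)$ is $\frac{z+c}{a}$ for $a\in k^\times$, $c\in k$, which we write uniquely as $\frac{P_1}{a_1}$ with $P_1=z+c$ monic of degree $1$ and $a_1=a$. For the inductive step, given $\frac{A}{B}\in\Fr_n(k)$ with $n\ge 2$, I would perform polynomial division of $A$ by $B$: since $A$ is monic of degree $n$ and $B$ has degree $n-1$, write $A = B\cdot(\tfrac{1}{a}z + c) + R$ with $\deg R < n-1$ and leading coefficient $a\in k^\times$ of $B$ — equivalently, peel off the first partial quotient. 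The key computational point is to check that in the matrix presentation
\[
\begin{bmatrix}A & -V\\ B & U\end{bmatrix}
=
\begin{bmatrix}P_1 & -V_1\\ a_1 & U_1\end{bmatrix}
\cdot
\begin{bmatrix}A' & -V'\\ B' & U'\end{bmatrix},
\]
the first factor is the (unique) degree-$n_1$ piece extracted by this division step and $\frac{A'}{B'}\in\Fr_{n-n_1}(k)$ is a genuine lower-degree rational function to which induction applies; this is precisely the content of \cite[Proposition~3.1]{Caz} relating $\oplus$ to continued fractions, and I would cite it for the decomposition and its uniqueness rather than re-derive it.

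For the two numbered assertions: claim (1), $n = n_1+\cdots+n_r$, is immediate from the fact that $\oplus$ maps $\Fr_{n_1}\times\Fr_{n_2}\to\Fr_{n_1+n_2}$ (additivity of degree), iterated. Claim (2) is the multiplicativity of the resultant under $\oplus$. Here I would argue as follows: from the matrix definition, taking determinants gives $A_3U_3 + B_3V_3 = (A_1U_1+B_1V_1)(A_2U_2+B_2V_2) = 1$, confirming compatibility, but more to the point one has a resultant identity $\Res(A_3,B_3) = \pm\,\Res(A_1,B_1)\Res(A_2,B_2)$ with a sign depending only on the degrees $(n_1,n_2)$. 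This follows from the standard behaviour of resultants under the Euclidean algorithm — $\Res(A,B)$ with $\deg A=n$, $\deg B=n-1$ changes by a controlled sign and a power of the leading coefficient of $B$ when one replaces $A$ by $A \bmod B$ — together with $\Res(z+c,a)=a$ for the degree-one building blocks. Iterating, $\Res(A,B) = \varepsilon(\underline n)\,\Res(P_1,a_1)\cdots\Res(P_r,a_r) = \varepsilon(\underline n)\,a_1^{n_1}\cdots a_r^{n_r}$, since $\Res(P_i,a_i) = a_i^{\deg P_i} = a_i^{n_i}$ (the resultant of a degree-$n_i$ polynomial with a nonzero constant $a_i$, viewed as a degree-$n_i$ "polynomial", is $a_i^{n_i}$). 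The sign $\varepsilon(\underline n)$ is then visibly a function of the tuple $(n_1,\dots,n_r)$ alone.

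The main obstacle I anticipate is bookkeeping the sign $\varepsilon(\underline n)$ correctly and verifying it depends only on the degree sequence: one must track the $(-1)^{\deg\cdot\deg'}$-type factors in $\Res(A,B) = (-1)^{(\deg A)(\deg B)}\Res(B,A)$ and in each Euclidean reduction step, and confirm that no leading-coefficient factors survive beyond the $a_i^{n_i}$ already accounted for (this uses that the $P_i$ are \emph{monic}, so the intermediate remainders contribute no extra scalars). This is a finite, purely formal check about resultants under continued-fraction expansion; none of it requires new ideas, and the existence-and-uniqueness backbone is already supplied by \cite{Caz}, so I would keep this verification brief and refer to \emph{loc.\ cit.} for the structural statement, supplying only the resultant identity as the additional ingredient.
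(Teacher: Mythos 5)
Your proposal follows essentially the same approach as the paper: cite Cazanave for the existence and uniqueness of the $\oplus$-decomposition (the paper points to \cite[Example~3.3]{Caz}, you to Proposition~3.1, but the content is the same), then establish the resultant formula by iterating a multiplicativity property of $\Res$ under $\oplus$, using that the $P_i$ are monic so that $\Res(P_i,a_i)=a_i^{n_i}$. The paper records the single inductive identity $\Res\bigl(\tfrac{P}{a}\oplus\tfrac{A}{B}\bigr)=(-1)^{nd}a^{d}\Res(A,B)$ for $P$ monic of degree $d$, which pins down the sign explicitly and is precisely the specialization of the multiplicativity your sketch invokes.
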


\begin{proof}
 The decomposition of $\frac A B$ as a $\oplus$-sum of polynomials
is explained in \cite[Example~3.3]{Caz}. The formula expressing the resultant
$\R(A,B)$
in terms of the resultants $\R(P_i,a_i)$ can be seen by induction by noting that
if $P$ is a monic polynomial of degree $d$, $a \in k^\times$ and
$\frac A B \in \Fr_n(k)$ one has
$$ \R\left(\frac P a \oplus \frac A B\right)= (-1)^{nd} a^d \R(A, B).$$
\end{proof}

\begin{remark}
\label{rem-sign}
The precise expression of the sign $\epsilon$ will not be needed in the sequel.
It will be enough note that when all the $n_i$ are even the sign $\epsilon(\un)$
is equal to 1.
\end{remark}

Over a field $k$, specifying a pointed degree $n$
rational function $f=\frac A B$ such that $\R(A,B)=1$ is thus equivalent
 to specifying:
\begin{itemize}
\item an integer $1\leq r\leq n$
\item an ordered decomposition $n=n_1+\dots + n_r$ (with integers $n_i\geq 1$)
\item monic polynomials $P_1,\dots, P_r$ of degrees $n_1,\dots,n_r$
\item units $a_1,\dots,a_r \in k^\times$ such that
$ a_1^{n_1}\cdots a_r^{n_r} \epsilon(\un)=1.$
\end{itemize}

We now specialize to $k=\F_q$.
For $x\in \F_q^\times$, let us denote by
$\Cc(\un,x)$ the cardinality of the finite set $ \{(a_1,\dots, a_r)\in (\F_q^\times)^r,\
a_1^{n_1}\cdots a_r^{n_r}= x\} $.
We therefore have:
$$ \vert \R_n(\F_q)\vert = \sum_{r=1}^n\sum_{n=n_1+\dots+n_r} q^{n}
\Cc(\un,\epsilon(\un)) \}.$$
(The factor $q^n=q^{n_1+\dotsc+n_r}$ counts the
choices for the polynomials $P_i$.)\\

The following lemma is useful:
\begin{lemma}
\label{lem-sign}
 Let $\un=\{n_1, \dots, n_r\}$ as above. Then
\begin{enumerate}
\item One has $\Cc(\un,1)=(q-1)^{r-1} \gcd(q-1,n_1,\cdots, n_r)$.
\item If one of the $n_i$ is odd, the one has the equality $ \Cc(\un,1)=
\Cc(\un,-1).$
\end{enumerate}

\end{lemma}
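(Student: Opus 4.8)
The plan is to count solutions $(a_1,\dots,a_r)\in(\F_q^\times)^r$ of the single equation $a_1^{n_1}\cdots a_r^{n_r}=x$ by exploiting the cyclic structure of $\F_q^\times$. Writing $\F_q^\times\cong\Z/(q-1)$ additively via a choice of generator, the equation becomes the linear congruence $n_1 t_1+\cdots+n_r t_r\equiv s \pmod{q-1}$, where $x$ corresponds to $s$. First I would treat part (1): the map $(\F_q^\times)^r\to\F_q^\times$ sending $(a_i)\mapsto\prod a_i^{n_i}$ is a homomorphism of abelian groups, so its image is the subgroup generated by the $n_i$-th powers, which is the unique subgroup of index $d:=\gcd(q-1,n_1,\dots,n_r)$, and every nonempty fiber has the same cardinality, namely $(q-1)^r/(\text{size of image})=(q-1)^r/\frac{q-1}{d}=(q-1)^{r-1}d$. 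Since $1$ lies in the image, $\Cc(\un,1)=(q-1)^{r-1}\gcd(q-1,n_1,\dots,n_r)$, as claimed.

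For part (2), suppose some $n_i$ is odd, say $n_1$ is odd; I want to show $\Cc(\un,1)=\Cc(\un,-1)$. It suffices to produce a bijection between the fiber over $1$ and the fiber over $-1$. Since $n_1$ is odd, the map $a_1\mapsto -a_1$ satisfies $(-a_1)^{n_1}=-a_1^{n_1}$, so the substitution $(a_1,a_2,\dots,a_r)\mapsto(-a_1,a_2,\dots,a_r)$ carries a solution of $\prod a_i^{n_i}=1$ to a solution of $\prod a_i^{n_i}=-1$ and is visibly an involution-type bijection (its inverse is the same substitution). This gives $\Cc(\un,1)=\Cc(\un,-1)$.

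I do not expect a serious obstacle here; the only mild subtlety is the case $q$ even, where $-1=1$ in $\F_q$ and both statements are trivially true (the map in part (2) is the identity, and part (1) is unaffected). One should also note that part (1) as stated implicitly asserts the fiber over $1$ is always nonempty, which holds since $(1,\dots,1)$ is a solution, so the "every nonempty fiber has equal size" argument indeed applies to it. The argument is purely elementary group theory of $\F_q^\times$, consistent with the appendix's stated goal of avoiding étale cohomology.
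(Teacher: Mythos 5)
Your proof is correct and takes essentially the same approach as the paper: both pass to the cyclic group $\Z/(q-1)$, where part (1) is the same gcd count (you compute the kernel of the product homomorphism $(a_i)\mapsto\prod a_i^{n_i}$, while the paper solves the linear congruence $n_1x_1+\cdots+n_rx_r\equiv 0$ by Euclidean changes of variables --- the same computation), and part (2) uses the identical bijection $a_i\mapsto -a_i$ on an odd-exponent coordinate. No gaps.
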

\begin{proof}
\begin{enumerate}
 \item Recall that the group of units $\mathbf F_q^\times$ is cyclic of order $q-1$. Therefore,
$\Cc(\un,1)$ is also the number of solutions $(x_1,\dots, x_r) \in (\Z/(q-1)\Z)^r$
 of the linear equation
$$n_1 x_1+ \dots + n_r x_r=0.$$ To count the number solutions
of this equation,
one can use the invertible changes of variables dictated by the Euclidean algorithm
to reduce it to the equivalent equation $\gcd(n_1,\dots,n_r)x_1=0$.
\item Suppose $n_i$ is an odd integer.  Then one has an explicit bijection between the two sets in question given by $a_i\mapsto-a_i$.
\end{enumerate}
\end{proof}

Combining Remark~\ref{rem-sign} and Lemma~\ref{lem-sign}, we get rid of the
signs $\epsilon(\un)$ to obtain:
\begin{equation}
\label{eq-rn}
 \vert \R_n(\F_q)\vert = q^n\sum_{r=1}^n  \sum_{n=n_1+\dots+n_r} (q-1)^{r-1}
   \gcd(q-1,\un).
\end{equation}

For a fixed $r$, all decompositions $\un$ with same $\gcd$ contribute equally, so we regroup these as follows.
\begin{notation}
Let $n,r\geq 1$ be two integers.
 \begin{itemize}
  \item Let $\pi_r(n)$ be the number of decompositions of $n$ of length $r$. One has
$\pi_r(n)=\binom {n-1} {r-1}$. 
\item For $d$ a divisor of $n$, let $\pi_r(n,d)$ be the number of length $r$ decompositions
 of $n$ with $\gcd$ equal to $d$. One has $\pi_r(n,d)=\pi_r(\frac n d, 1)$.
  \end{itemize}
\end{notation}
From the identity $\pi_r(n) =\sum\limits_{d\vert n} \pi_r(n,d)= \sum\limits_{d\vert n} \pi_r(\frac n d,
1)$, the Möbius inversion formula gives
$ \pi_r(\cdot ,1)= \mu \star \binom {\cdot -1}{r-1}$,
 where $\mu$ is the Möbius function and $\star$ denotes the Dirichlet convolution product. Inserting this into equation~(\ref{eq-rn}) leads to :
$$ \card = q^n \left [ \mu \star
 \left(\sum_{r=1}^{n} q^{r-1} \binom{\cdot-1} {r-1}\right)  \star \gcd(\cdot,q-1)
\right ](n) =
 q^n \left [ \mu \star q^{\cdot -1} \star \gcd(\cdot,q-1) \right](n). $$

We have thus proved the following theorem.
\begin{theorem}
\label{th-card}
 Let $n$ be any positive integer and let $q$ be any prime power. Then
$$ \vert \R_n(\F_q) \vert = q^n \left(\sum_{abc=n}\mu(a) q^{b -1}  \gcd(c, q-1)\right).$$
\end{theorem}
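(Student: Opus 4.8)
The plan is to assemble the ingredients already assembled in the appendix — the unique $\oplus$-decomposition of Lemma~\ref{lemmefrac}, the sign bookkeeping of Remark~\ref{rem-sign} and Lemma~\ref{lem-sign}, and the combinatorics of compositions — into a single point count, and then to recognize the answer as a triple Dirichlet convolution.

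First I would use Lemma~\ref{lemmefrac} to parametrize $\R_n(\F_q)$: an element $\frac{A}{B}\in\Fr_n(\F_q)$ with $\R(A,B)=1$ corresponds uniquely to a length $r$, an ordered composition $n=n_1+\dots+n_r$, monic polynomials $P_i$ of degree $n_i$, and units $a_i\in\F_q^\times$ with $\epsilon(\un)\,a_1^{n_1}\cdots a_r^{n_r}=1$. Since $\Fr_n\cong\Mc_n$ and the $\oplus$-decomposition is multiplicative for the resultant, this bijection, together with the count of $q^{n_1+\dots+n_r}=q^n$ choices of monic $P_i$, gives
\[
|\R_n(\F_q)|=\sum_{r=1}^{n}\ \sum_{n=n_1+\dots+n_r} q^{n}\,\Cc(\un,\epsilon(\un)).
\]

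Next I would dispose of the signs. I would split the sum according to whether all $n_i$ are even: in that case $\epsilon(\un)=1$ by Remark~\ref{rem-sign}, while otherwise some $n_i$ is odd and $\Cc(\un,1)=\Cc(\un,-1)$ by Lemma~\ref{lem-sign}(2), so in all cases $\Cc(\un,\epsilon(\un))=\Cc(\un,1)$. Substituting the closed form $\Cc(\un,1)=(q-1)^{r-1}\gcd(q-1,n_1,\dots,n_r)$ from Lemma~\ref{lem-sign}(1) produces \eqref{eq-rn}. Finally I would reorganize \eqref{eq-rn} by the value $d=\gcd(n_1,\dots,n_r)$, which is a divisor of $n$: there are $\pi_r(n/d,1)$ length-$r$ compositions with this gcd, and the gcd factor becomes $\gcd(q-1,d)$. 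Möbius inversion applied to $\binom{m-1}{r-1}=\pi_r(m)=\sum_{d\mid m}\pi_r(m/d,1)$ gives $\pi_r(\cdot,1)=\mu\star\binom{\cdot-1}{r-1}$; summing over $r$ and using the binomial identity $\sum_{r\ge1}(q-1)^{r-1}\binom{m-1}{r-1}=q^{m-1}$ collapses the $r$-sum to $\mu\star q^{\cdot-1}$. This yields $|\R_n(\F_q)|=q^n\big[\mu\star q^{\cdot-1}\star\gcd(\cdot,q-1)\big](n)$, which is the stated formula once the convolution is written out over factorizations $abc=n$.

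No step here is a genuine obstruction once Lemma~\ref{lemmefrac} is in hand; the points requiring care are the parity case-split (so that the sign $\epsilon(\un)$ never actually influences the count) and the order of operations in the convolution computation. I would keep everything expressed as Dirichlet convolutions of explicit arithmetic functions and perform the sum over $r$ only at the very end, where the collapse $(1+(q-1))^{m-1}=q^{m-1}$ delivers the clean final form.
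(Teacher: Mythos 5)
Your proposal is correct and follows the paper's own argument step by step: parametrize $\R_n(\F_q)$ via the $\oplus$-decomposition of Lemma~\ref{lemmefrac}, eliminate the sign $\epsilon(\un)$ using Remark~\ref{rem-sign} and Lemma~\ref{lem-sign}(2), substitute $\Cc(\un,1)=(q-1)^{r-1}\gcd(q-1,\un)$, regroup compositions by their gcd, Möbius-invert to get $\pi_r(\cdot,1)=\mu\star\binom{\cdot-1}{r-1}$, and collapse the $r$-sum with $\sum_{r}(q-1)^{r-1}\binom{m-1}{r-1}=q^{m-1}$. (You also implicitly fix a small typo in the paper's displayed convolution, which writes $q^{r-1}$ where $(q-1)^{r-1}$ is needed for the binomial collapse to give $q^{\cdot-1}$.)
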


We end up briefly showing that the point count of Theorem~\ref{th-card} coincides
with that of Corollary~\ref{cor:pointcount}. The two formulas are similar. The only point
is to check the following lemma.

\begin{lemma}
 Let $m\geq 1$ be an integer and let $\delta_m : \Nb \to \Nb$ be the function
$$\delta_m: n \mapsto \begin{cases}
                       1 \ \text{if } n \,\vert\, m \\
                       0 \ \text{otherwise.}
                      \end{cases} $$
Then whe have the identity $\delta_m \phi = \mu \star \gcd(\cdot,m)$.
\end{lemma}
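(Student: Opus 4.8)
The plan is to recognize the right-hand side as a Möbius inversion and to identify the classical arithmetic identity that makes the statement immediate. Throughout, $\star$ denotes Dirichlet convolution, $\mathbf{1}$ is the constant arithmetic function $1$, $\mathrm{id}$ is the function $n \mapsto n$, and $\varepsilon$ is the unit of $\star$, i.e. $\varepsilon(1) = 1$ and $\varepsilon(n) = 0$ for $n > 1$. Recall the classical fact $\phi \star \mathbf{1} = \mathrm{id}$, that is, $\sum_{e \mid k} \phi(e) = k$ for all $k \geq 1$, and the defining property $\mathbf{1} \star \mu = \varepsilon$ of the Möbius function.

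First I would establish the identity
\[
    \gcd(n, m) \;=\; \sum_{\substack{d \mid n \\ d \mid m}} \phi(d)
    \qquad (n \geq 1),
\]
which holds because the divisors of $\gcd(n, m)$ are exactly the positive integers dividing both $n$ and $m$, so that applying $\sum_{e \mid k} \phi(e) = k$ with $k = \gcd(n, m)$ gives the claim. Since $\delta_m(d) = 1$ precisely when $d \mid m$, this identity says exactly that
\[
    \gcd(\cdot, m) \;=\; (\delta_m \cdot \phi) \star \mathbf{1}
\]
as arithmetic functions.

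The last step is then a one-line Möbius inversion: convolving both sides with $\mu$ and using associativity and commutativity of $\star$ together with $\mathbf{1} \star \mu = \varepsilon$,
\[
    \mu \star \gcd(\cdot, m)
    \;=\; (\delta_m \cdot \phi) \star \mathbf{1} \star \mu
    \;=\; (\delta_m \cdot \phi) \star \varepsilon
    \;=\; \delta_m \cdot \phi,
\]
which is the desired equality $\delta_m \phi = \mu \star \gcd(\cdot, m)$. I do not expect any genuine obstacle here: the only thing to notice is that $\gcd(\cdot, m)$ is the divisor-summatory function of $\delta_m \cdot \phi$, after which the result is forced. As a sanity check one could instead observe that $\mu$, $\phi$, $\delta_m$ and $\gcd(\cdot, m)$ are all multiplicative, hence so are both sides, and verify the identity on prime powers $n = p^k$ — but the inversion argument above is cleaner and needs no case analysis.
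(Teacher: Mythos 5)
Your proof is correct and takes a genuinely different, and arguably cleaner, route than the paper's. The paper verifies the identity pointwise by a case split on $n$: when $n\mid m$, every divisor $d$ of $n$ also divides $m$, so $\gcd(\cdot,m)$ agrees with $\mathrm{id}$ on divisors of $n$ and the result reduces to $\phi=\mu\star\mathrm{id}$; when $n\nmid m$, the paper reduces to the case where $m$ is a proper divisor of $n$, picks a prime $p\mid n/m$, and cancels the terms of $\sum_{d\mid n}\mu(n/d)\gcd(d,m)$ in pairs by matching squarefree divisors according to whether they are divisible by $p$. You instead identify the structural reason the identity holds: $\gcd(\cdot,m)$ is the divisor-summatory function of $\delta_m\phi$, since $\bigl((\delta_m\phi)\star\mathbf{1}\bigr)(n)=\sum_{d\mid n,\,d\mid m}\phi(d)=\sum_{d\mid\gcd(n,m)}\phi(d)=\gcd(n,m)$, and then a single application of $\mu\star\mathbf{1}=\varepsilon$ gives $\mu\star\gcd(\cdot,m)=\delta_m\phi$ uniformly, with no case analysis and no sign-cancellation argument. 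The paper's method is elementary and self-contained but slightly ad hoc in the $n\nmid m$ branch; your method exhibits the lemma as an instance of the general principle $\mu\star(f\star\mathbf{1})=f$ and makes the appearance of $\phi$ transparent. As you note, one could alternatively observe that all the functions involved are multiplicative and check the identity on prime powers, but the inversion argument is the most economical.
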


\begin{proof}
Let $n\geq 1$ be an integer. We prove that these two functions agree on $n$.
We distinguish cases according to whether $n$ divides $m$ or not.

In the first case, the function $\gcd(\cdot,m)$ coincides with the identity
and the equality follows from $\phi= \mu \star \mathrm{id}$.

In the second case, we have to show that:
$$\sum_{d\,\vert\,n}\mu(d) \gcd(d,m)=0.$$
Replacing $m$ by $\gcd(n,m)$ does not change the value, so one can assume that $m$ divides (strictly)
$n$.
Let $p$ be a prime dividing $\frac n m$.
We conclude noting that:
\begin{itemize}
 \item in the above sum, only the divisors without square factor
have a non-zero contribution;
\item the remaining divisors can be split into those divisible by $p$ and those not.
These two sets have opposite contributions in the sum.
\end{itemize}
\end{proof}

\begin{remark}
Given $x\in \F_q^\times$, we could in the same way write a formula
for the number of pointed degree $n$ rational functions $\frac A B \in \Fr_n(\F_q)$
with resultant $\R(A,B)=x$. This number equals
$$q^n \left(\sum_{abc=n}\mu(a) q^{b -1}  g(c)\right)$$
where $o$ is the order of $x$ in $\F_q^\times$ and
$g: \Nb \to \Nb$ is the function
$$ g:n \mapsto \begin{cases}
                \gcd(n,q-1) \text{ if } \gcd(n,q-1) \text{ divides } \frac{q-1} o\cr
                 0 \text{ otherwise.}
               \end{cases}
$$
However, it is interesting to note that, as a function of $x$, this cardinal
always reaches its maximum for $x=1$. This is because the functions $\Cc(\un,r)$ also reach their maximum at $x=1$.
The reason comes from linear algebra: a homogeneous linear equation $a_1x_1+\dotsc+a_rx_r=0$ has always at least as many
solutions as the corresponding inhomogeneous one $a_1x_1+\dotsc+a_rx_r=k$.
\end{remark}

\providecommand{\bysame}{\leavevmode\hbox to3em{\hrulefill}\thinspace}
\providecommand{\MR}{\relax\ifhmode\unskip\space\fi MR }
\providecommand{\MRhref}[2]{%
  \href{http://www.ams.org/mathscinet-getitem?mr=#1}{#2}
}
\providecommand{\href}[2]{#2}

\bigskip{\noindent
Dept. of Mathematics, University of Chicago, USA\\
E-mail: farb@math.uchicago.edu, wolfson@math.uchicago.edu\\
\\
Lab. de Math\'ematiques J.A. Dieudonn\'e, Universit\'e de Nice Sophia-Antipolis, France\\
E-mail: cazanave@unice.fr
}

\end{document}